\relax
\documentclass[letterpaper]{article} 
\usepackage{aaai22}  
\usepackage{times}  
\usepackage{helvet}  
\usepackage{courier}  
\usepackage[hyphens]{url}  
\usepackage{graphicx} 
\urlstyle{rm} 
\usepackage{natbib}  
\usepackage{caption} 
\DeclareCaptionStyle{ruled}{labelfont=normalfont,labelsep=colon,strut=off} 
\frenchspacing  
\setlength{\pdfpagewidth}{8.5in}  
\setlength{\pdfpageheight}{11in}  
%
\usepackage{algorithm}
\usepackage{algorithmic}

%
\usepackage{newfloat}
\usepackage{listings}
\lstset{%
	basicstyle={\footnotesize\ttfamily},
	numbers=left,numberstyle=\footnotesize,xleftmargin=2em,
	aboveskip=0pt,belowskip=0pt,%
	showstringspaces=false,tabsize=2,breaklines=true}
\floatstyle{ruled}
\newfloat{listing}{tb}{lst}{}
\floatname{listing}{Listing}
%
%
\pdfinfo{
/Title (KAM Theory Meets Statistical Learning Theory: Hamiltonian Neural Networks with Non-Zero Training Loss)
/Author (Anonymous Author(s))
/TemplateVersion (2022.1)
}

\usepackage{amsmath}
\usepackage{amsthm}
\usepackage{amsfonts}
\usepackage{lipsum}
\usepackage{comment}
\usepackage{subfigure}
\newcommand{\R}{\mathbb{R}}
\newcommand{\fnn}{f_\mathrm{NN}}
\newcommand{\hnn}{H_\mathrm{NN}}
\newcommand{\unn}{u_\mathrm{NN}}
\newtheorem{thm}{Theorem}
\newtheorem{lem}{Lemma}
\newtheorem{dfn}{Definition}

\renewcommand{\d}{\mathrm{d}}
\newtheorem{rem}{Remark}
\newcommand{\NN}{\mathrm{NN}}

\title{KAM Theory Meets Statistical Learning Theory: \\ Hamiltonian Neural Networks with Non-Zero Training Loss
}
\author{
    Yuhan Chen, \textsuperscript{\rm 1}
    Takashi Matsubara, \textsuperscript{\rm 2}
    Takaharu Yaguchi\textsuperscript{\rm 1}
    \equalcontrib,
}
\affiliations{
    \textsuperscript{\rm 1}Kobe University, 1--1 Rokkodai, Nada, Kobe, Hyogo, Japan 657--8501\\
    \textsuperscript{\rm 2}Osaka University, 1--3 Machikaneyama, Toyonaka, Osaka, Japan 560--8531\\
     193x226x@stu.kobe-u.ac.jp, matsubara@sys.es.osaka-u.ac.jp, yaguchi@pearl.kobe-u.ac.jp
}

\begin{document}

\maketitle

\begin{abstract}
	Many physical phenomena are described by Hamiltonian mechanics using an energy function (the Hamiltonian). Recently, the Hamiltonian neural network, which approximates the Hamiltonian as a neural network, and its extensions have attracted much attention. This is a very powerful method, but its use in theoretical studies remains limited. 
	In this study, by combining the statistical learning theory and Kolmogorov--Arnold--Moser (KAM) theory, we provide a theoretical analysis of the behavior of Hamiltonian neural networks when the learning error is not completely zero.
	A Hamiltonian neural network with non-zero errors can be considered as a perturbation from the true dynamics, and the perturbation theory of the Hamilton equation is widely known as the KAM theory. 
	To apply the KAM theory, we provide a generalization error bound for Hamiltonian neural networks by deriving an estimate of the covering number of the gradient of the multi-layer perceptron, which is the key ingredient of the model.
	This error bound gives 
	an $L^\infty$ bound on the Hamiltonian that is required in the application of the KAM theory. 
\end{abstract}

\section{Introduction}
Many physical phenomena are described by energy-based theories, such as Hamiltonian mechanics (e.g., \citet{Furihata2010}).
The governing equation of Hamiltonian mechanics is
\begin{align}\label{eq:target}
	\frac{\mathrm{d}u}{\mathrm{d}t} = S \frac{\partial H}{\partial u},
\end{align}
where $u: t \in \R \mapsto u(t) \in \R^N$, $H: u \in \R^N \mapsto H(u) \in \R$, and $S$ is a skew-symmetric matrix. 
$H$ represents the energy function of the system.
In recent years, there has been a lot of research on predicting the corresponding physical phenomena by learning the energy function $H$ in such equations with a neural network $\hnn$~(e.g., \citet{Chen2019-eo,Cranmer2020-zw,Greydanus2019-gy,Matsubara2019-ey,Zhong2019-rl}); however, to the best of our knowledge, theoretical analysis of such models has not been performed sufficiently, except for SympNet \citep{Jin2020-zs} for the Hamilton equation, where the universal approximation theorems for discrete-time neural network models are provided.

In this paper, we focus on theories of the properties of the most fundamental model, comprising Hamiltonian neural networks (HNNs)~\citep{Greydanus2019-gy} 
\begin{align} \label{eq:model}
	\frac{\d u}{\d t} = S \frac{\partial \hnn}{\partial u}
\end{align}
and their extensions in practical situations, where the learning error is not completely zero.
In this case, the trained model can be regarded as a perturbed Hamiltonian system due to the modeling error of the energy function.
In addition, $S$ is a general skew-symmetric matrix and hence \eqref{eq:model} can model Hamiltonian partial differential equations~\cite{Matsubara2019-ey}. 
\begin{figure*}[t]
\centering
\includegraphics[width=0.9\linewidth]{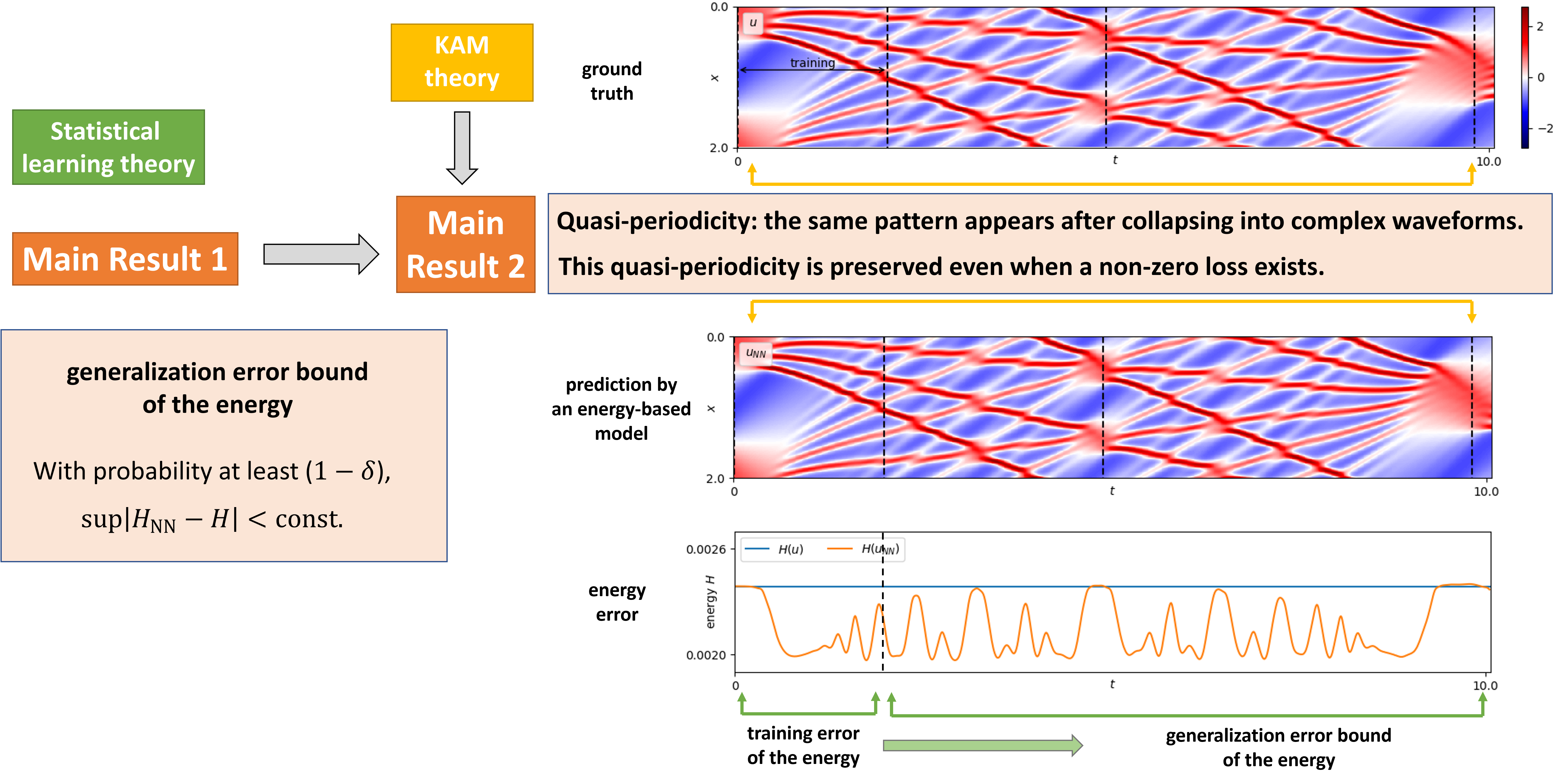}
\caption{Outline of our main theorem. The first main result is the generalization error of the energy function, which is proved by the statistical learning theory. By combining the first result and the KAM theorem, we prove that the quasi-periodic behaviors of target systems are preserved.} 
\label{fig:2} 
\end{figure*}

In mathematical physics, perturbed Hamiltonian systems are well studied.
For example, because whether the solar system will continue to exist in the future is of fundamental interest in astronomy, the stability of the solar system under perturbations is a very important issue that has been studied for a long time (e.g., \citet{Fejoz2013-ud, Laskar1996}).
The Kolmogorov--Arnold--Moser (KAM) theory gives an answer to questions of this type; essentially, periodic motions of such systems are stable under small perturbations.
The stability of periodic motions is of particular importance in science.
In addition to the stability of celestial systems, the recursive nature of physical phenomena is also of interest in physics.  For example, in the famous numerical experiments using the Korteweg--De Vries (KdV) equation by \citet{Zabusky1965-mn}, it was confirmed that the waveform given as the initial condition initially collapsed due to severe oscillations, and then returned to its original shape after a long time.
Whether such phenomena can be reproduced by deep learning models is an important problem that greatly affects the usefulness of deep physical models.

In this paper, we give an answer to this question by combining the KAM theory and statistical learning theory.
Because trained models are also perturbed Hamiltonian systems, it is expected that the periodic behaviors of the systems are preserved even if the loss function does not vanish completely provided it is sufficiently small.
However, this expectation cannot be proved in a straightforward way, because the application of the KAM theory requires that {\em the energy function be close enough to that of the true dynamics in the whole phase space.}
Noticing that the error of the energy function in the whole space is essentially the generalization error, we overcome this difficulty by combining the KAM theory with statistical learning theory.
This illustrates that combinations of statistical learning theory and dynamical systems theory can lead to powerful results.
Indeed, combinations of this kind may be applicable to other stability results in dynamical system contexts, for example, the stability of solitary wave solutions.

Importantly, for the neural network models to be close to the true dynamics, we need a universal approximation theorem and also a generalization error bound. In this paper, we also provide such results for HNNs.  

Regarding the generalization error bound, because the derivative of a multi-layer perceptron is used in HNNs, a bound for the derivative is required. To this end, we estimated the covering number of the derivative of multi-layer perceptrons.
An $L^\infty$ bound on the error in the Hamiltonian is also provided, which is required for application of the KAM theory.

In addition, we show a universal approximation theorem for a model with the coordinate transformation 
\begin{align}\label{eq:transformed}
	\frac{\mathrm{d}x}{\mathrm{d}t}=(\frac{\partial u}{\partial x})^{-1} S (\frac{\partial u}{\partial x})^{-\top} \frac{\partial H}{\partial x}.
\end{align}
This model is indispensable in practice; to apply HNNs, the data are given in the canonical coordinate system because the equation of motion is in the form of the Hamilton equation \eqref{eq:target} only when the state variables are represented by the canonical coordinate system. However, this coordinate system depends on an unknown Lagrangian and hence the energy function. 
Hence, the coordinate system must be also learned from data by using, for example, neural networks.
In addition, this model also represents other energy-based physical models beyond the Hamilton equation. 
See the Appendix for details. 

The main contributions of this paper are as follows.
\begin{enumerate}
	\item \textbf{Combination of the KAM theory and statistical learning theory for HNNs with non-zero training loss to prove the existence of quasi-periodic behaviors (see Figure \ref{fig:2}).}
	
\item \textbf{Derivation of a generalization error bound for HNNs.} 
	\item \textbf{Development of a universal approximation theorem for HNNs and other energy-based physical models with coordinate transformations.}
\end{enumerate}

\section{Related work}\label{sec:2}
Many studies of neural network models for phenomena that can be modeled by energy-based equation \eqref{eq:target} have been put forward.
Among them, the most basic studies are neural ordinary differential equations \citep{Chen2018-yw} and HNNs \citep{Greydanus2019-gy}. In particular, extensions of HNNs have been intensively developed.

Describing them all is beyond the scope of this paper, but some examples are given here. In \citet{Toth2019-ix} HNNs were extended to latent variable models.
Other studies, such as \citet{ DiPietro2020-mj, Xiong2020-ci, Zhong2019-rl}, focused on the symplectic structure of the Hamilton equation.
For Noether's theorem,
which is a fundamental theorem in classical mechanics,
several studies 
\cite{Bharadwaj2020-fr, Bondesan2019-vm, Finzi2020-zh} developed methods related to symmetry and conservation laws.
In addition, a discrete-time model that preserves the energy behaviors was constructed in \citet{Matsubara2019-ey}.
In \citet{Galioto2020-xc}, HNNs were combined with a Bayesian approach.

Methods applied to the framework of classical mechanics other than Hamiltonian mechanics include those in \citet{Cranmer2020-zw, Desai2020-ar,noauthor_undated-ko}, which are methods for Lagrangian formalism. In \citet{Jin2020-wl}, reinforcement learning was applied to the variational principle.
A simplified model formed by introducing constraints was proposed in \citet{Finzi2020-qa}.
In \citet{Jin2020-qd}, HNNs were extended to the Poisson system, which is a wider class of mechanical equations.
There are also a number of proposals that integrate them with more advanced deep learning techniques, namely, graph networks \citep{Sanchez-Gonzalez2019-js}, recurrent neural networks \citep{Chen2019-eo}, and normalizing flows \citep{Li2020-rx}.
As an application-oriented approach, \citet{Feng2020-hd} designed a microscopic model for structural analysis.

However, as far as the authors know, there is no theoretical research other than the universal approximation theorems for Hamiltonian mechanics in SympNet \cite{Jin2020-zs}, in which a certain kind of neural network is shown to have universal approximation properties for symplectic maps. The difference between their results and ours is that (1) we analyze the behaviors of a HNN with non-zero training loss by a combination of the KAM theory and statistical learning theory, (2) we provide a generalization error bound for NHHs, and (3) the universal approximation theorems in \citet{Jin2020-zs} are for discrete-time models, while ours are for continuous-time models.

Meanwhile, as an existing energy-based model, Hopfield neural network is known. Both Hopfield neural network and Hamiltonian network are derived from energy-based theories, and their dynamics are described by (1).
Hamiltonian neural network is associated with a skew-symmetric matrix $S$ and is a model of an energy-preserving, continuous-time, and deterministic physics phenomenon.
Its output is the time-series of the state.
Hopfield network is associated with a negative definite matrix $S$ and exhibits a dynamics which is often energy-dissipating, discrete-time, and stochastic.
It is a machine learning tool rather than a physical model, and its equilibrium point is treated as its output.
Because their outputs are different, their theoretical properties should be discussed separately.

\section{Brief introduction to Hamiltonian systems and the KAM theory}




We briefly introduce some properties of Hamiltonian systems.

\begin{thm}[Darboux]
	By an appropriate coordinate transformation, the matrix $S$ is transformed into the normal form
	\begin{align*}
		\begin{pmatrix}
			O  & I \\
			-I & O
		\end{pmatrix}.
	\end{align*}
\end{thm}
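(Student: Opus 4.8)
The plan is to read the statement as a fact about the constant skew-symmetric matrix $S$ in \eqref{eq:target}: if we regard $S$ as the Gram matrix of the bilinear form $\omega(a,b)=a^\top S b$ on $\R^N$, then a linear coordinate change $u=Px$ replaces $S$ by the congruent matrix $P^\top S P$, so the claim is that a suitable invertible $P$ achieves $P^\top S P=J$ with $J=\begin{pmatrix}O&I\\-I&O\end{pmatrix}$. I will assume $S$ is nondegenerate (invertible), which is the case in which the stated normal form has no Casimir block; in that case $\det S=\det S^\top=(-1)^N\det S$ forces $N=2m$ to be even.

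The proof is an induction on $m$ via a symplectic Gram--Schmidt procedure. First I would pick any nonzero $e_1\in\R^{2m}$; since $\omega$ is nondegenerate there is an $f_1$ with $\omega(e_1,f_1)\neq 0$, and after rescaling $f_1$ we may take $\omega(e_1,f_1)=1$. Skew-symmetry gives $\omega(e_1,e_1)=\omega(f_1,f_1)=0$ and $\omega(f_1,e_1)=-1$, so on $W_1:=\mathrm{span}\{e_1,f_1\}$ the form $\omega$ is already in the standard $2\times 2$ shape, hence nondegenerate. Next I would establish $\R^{2m}=W_1\oplus W_1^{\omega}$ where $W_1^{\omega}:=\{v:\omega(v,e_1)=\omega(v,f_1)=0\}$: the intersection $W_1\cap W_1^{\omega}$ is trivial because $\omega|_{W_1}$ is nondegenerate, and $\dim W_1^{\omega}=2m-2$ because the two functionals $\omega(\cdot,e_1),\omega(\cdot,f_1)$ are linearly independent, so the dimensions add up. The restriction of $\omega$ to $W_1^{\omega}$ is again a nondegenerate skew form, so by the induction hypothesis there is a basis $e_2,\dots,e_m,f_2,\dots,f_m$ of $W_1^{\omega}$ in which it is standard.

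Collecting the vectors in the order $e_1,\dots,e_m,f_1,\dots,f_m$ gives a basis of $\R^{2m}$, and letting $P$ be the matrix with these columns one checks directly from $\omega(e_i,e_j)=\omega(f_i,f_j)=0$, $\omega(e_i,f_j)=\delta_{ij}$ that $P^\top S P=J$; this is the assertion. I expect no genuine obstacle, as the content is elementary linear algebra --- the only point needing a little care is the direct-sum decomposition $\R^{2m}=W_1\oplus W_1^{\omega}$, which is precisely where nondegeneracy of $S$ enters. For completeness I would append a remark that if $S$ is permitted to be singular, running the same induction on a complement of $\ker S$ yields the more general normal form with an extra zero block corresponding to the Casimir directions.
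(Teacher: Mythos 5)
Your argument is correct. The paper states Darboux's theorem as classical background and gives no proof of it, so there is nothing to compare against; what you have written is the standard symplectic Gram--Schmidt induction for the linear normal form of a nondegenerate skew-symmetric bilinear form, and that is exactly the relevant version here, since in this paper $S$ is a constant matrix and the required coordinate change is linear. Your reading of the statement as a congruence normal form is the right one: under the substitution $u = Px$ the equation $\dot u = S\,\partial H/\partial u$ becomes $\dot x = (P^{-1} S P^{-\top})\,\partial (H\circ P)/\partial x$, so the structure matrix transforms by congruence (with $P^{-\top}$ in place of your $P$, a purely cosmetic relabeling). Two small points worth making explicit if you write this up: first, nondegeneracy of $S$ is implicit in the paper (the symplectic form is defined via $S^{-1}$), so your standing assumption is consistent with the setting and your parity argument $\det S = (-1)^N \det S$ correctly forces $N$ even; second, in the inductive step you should record the one-line verification that $\omega$ restricted to $W_1^{\omega}$ is nondegenerate --- any $v \in W_1^{\omega}$ that is $\omega$-orthogonal to all of $W_1^{\omega}$ is, by the direct sum $\R^{2m} = W_1 \oplus W_1^{\omega}$, orthogonal to the whole space and hence zero. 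Your closing remark on the singular case (an extra zero block for the Casimir directions) is accurate and matches the generalization the paper alludes to when it discusses Poisson systems.
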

\begin{dfn}
	The function $\omega: (v, w) \in \R^N \times \R^N \mapsto \omega(v, w) \in \R$
	\begin{align*}
		\omega(v, w) = v^\top S^{-1} w
	\end{align*}
	is called the symplectic form.
	Using the symplectic form associates a vector field $X_F$ with each function $F: \R^N \to \R$ by requiring
	\begin{align*}
		\omega(X_F, w) = \frac{\partial F}{\partial u} \cdot w \qquad \mbox{for all}\ w.
	\end{align*}
	For two functions $F, G$, the following operation
	is called the Poisson bracket:
	\begin{align}
		\{F, G\} = \omega(X_F, X_G).
	\end{align}
\end{dfn}

\begin{dfn}
	A Hamiltonian system for which the state variable is $N=2 M$ dimensional is integrable in the sense of Liouville if
	this Hamiltonian system has the first integrals (i.e., conserved quantities)
	$F_1, F_2, \ldots, F_M$ with $\nabla F_1(u), \nabla F_2(u), \ldots, \nabla F_M(u)$ independent at each $u$ and for all $i, j$:
	\begin{align*}
		\{F_i, F_j\} = 0.
	\end{align*}
\end{dfn}

For integrable systems, Theorem 2 is known.
\begin{thm}[Liouville--Arnold]\label{thm:LA}
	Suppose that for an integrable Hamiltonian system, constants $c_1,\ldots,c_M$ exist such that $K = \cap_{i=1}^M F_i^{-1}(c_i)$ is connected and compact.
	Then, there exists a neighborhood $\mathcal{N}$ comprising $K$, $\mathcal{U} \subset \R^n$  and a coordinate transform
	\begin{align}
		\phi: (\theta, J) \in \mathbb{T}^n \times \mathcal{U} \to \phi(\theta, J) \in \mathcal{N}
	\end{align}
	such that
	the transformed system is the Hamilton equation of which Hamiltonian $H \circ \phi$ depends only on $J$.
\end{thm}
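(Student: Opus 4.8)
The plan is to follow the classical route to action--angle coordinates. First I would show that $K$ is a compact $M$-dimensional Lagrangian submanifold. Since the $\nabla F_i(u)$ are independent, $c=(c_1,\dots,c_M)$ is a regular value of $F=(F_1,\dots,F_M)$, so $K=F^{-1}(c)$ is a smooth $M$-manifold with $T_uK=\bigcap_i\ker\d F_i(u)$. The relations $\{F_i,F_j\}=0$ give $\d F_j(X_{F_i})=\{F_j,F_i\}=0$, so each $X_{F_i}$ is tangent to $K$; nondegeneracy of $\omega$ makes the $X_{F_i}$ linearly independent, hence a frame for $TK$, and $\omega(X_{F_i},X_{F_j})=\{F_i,F_j\}=0$ shows $\omega|_{TK}=0$, so $K$ is Lagrangian.

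Next I would use the commuting complete flows. Since $[X_{F_i},X_{F_j}]=X_{\{F_i,F_j\}}=0$ and each $X_{F_i}$ is tangent to the compact set $K$, the flows $\phi^i_t$ are complete on $K$ and commute, yielding a smooth action $\Phi\colon\R^M\times K\to K$, $\Phi(t,\cdot)=\phi^1_{t_1}\circ\cdots\circ\phi^M_{t_M}$. As the $X_{F_i}$ span $TK$, the action is locally transitive and locally free; connectedness of $K$ makes it transitive, so $K\cong\R^M/\Lambda_u$ where the isotropy group $\Lambda_u$ is discrete. A discrete subgroup of $\R^M$ is a lattice of rank $k\le M$, and compactness forces $k=M$; hence $K$ is diffeomorphic to $\mathbb T^M$, and a lattice basis supplies angle coordinates $\theta\in\mathbb T^M$.

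Then I would spread this over a neighborhood and make it symplectic. Since regular values form an open set, for $c'$ near $c$ the level set $K_{c'}=F^{-1}(c')$ (restricted to a neighborhood of $K$) is again a compact Lagrangian $M$-torus, with period lattice varying smoothly in $c'$, so a smoothly varying basis gives a diffeomorphism $(\theta,c')\in\mathbb T^M\times\mathcal U'\to\mathcal N:=\bigcup_{c'\in\mathcal U'}K_{c'}$. To symplectify it I would pass from $c'$ to the action variables $J_i=\tfrac1{2\pi}\oint_{\gamma_i(c')}\lambda$, where $\d\lambda=\omega$ on a neighborhood and $\gamma_i(c')$ represents the $i$-th lattice cycle of $H_1(K_{c'})$; since $K_{c'}$ is Lagrangian, $\lambda|_{K_{c'}}$ is closed, so the periods depend only on homology classes and $J=J(c')$ alone. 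One checks $\partial J/\partial c'$ is nonsingular, so $J$ may replace $c'$, and after shifting the angles by a function of $J$ the symplectic form becomes $\sum_i\d\theta_i\wedge\d J_i$; the resulting $\phi\colon\mathbb T^M\times\mathcal U\to\mathcal N$ is the desired symplectomorphism.

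Finally, since the $F_i$ are first integrals, $\{H,F_i\}=0$, so $\d H$ annihilates the frame $X_{F_1},\dots,X_{F_M}$ of each $TK_{c'}$ and therefore lies in the span of $\d F_1,\dots,\d F_M$ on $\mathcal N$; thus $H$ is locally a function of $(F_1,\dots,F_M)$, equivalently of $J$, and $H\circ\phi$ depends only on $J$. I expect the main obstacle to be the symplectification in the third step: checking that the period integrals are well defined, that $\partial J/\partial c'$ is invertible (a Stokes-and-linear-algebra argument using independence of the $\d F_i$), and that the transformed $\omega$ really attains the standard Darboux form once the angles are corrected. The torus identification in the second step is also a little delicate, relying on the classification of discrete subgroups of $\R^M$ together with the compactness argument that pins the rank down to $M$.
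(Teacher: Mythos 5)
The paper does not actually prove this theorem: it is quoted as a classical result (the Liouville--Arnold theorem), with the reader referred to standard references such as \citet{Scott_Dumas2014-gl}, so there is no in-paper argument to compare yours against. Your proposal is a correct outline of the standard Arnold construction: (i) $K$ is a compact Lagrangian $M$-torus because the commuting, complete, pointwise-independent Hamiltonian vector fields $X_{F_i}$ generate a transitive, locally free $\R^M$-action whose discrete isotropy lattice must have full rank by compactness; (ii) action variables arise as period integrals of a primitive $\lambda$ of $\omega$ over a basis of $H_1$ of the nearby level tori; (iii) $\{H,F_i\}=0$ forces $\d H\in\mathrm{span}\{\d F_1,\dots,\d F_M\}$, so $H\circ\phi$ depends only on $J$. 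The points you flag as delicate are indeed where the real work lies: you should also justify that $\omega$ is exact on a tubular neighborhood of $K$ (it deformation-retracts onto the Lagrangian torus $K$, on which $\omega$ vanishes, so $[\omega]=0$ there), verify $\det(\partial J/\partial c')\neq 0$ via Stokes' theorem and the independence of the $\d F_i$, and carry out the generating-function correction of the angles that brings $\omega$ to the form $\sum_i \d\theta_i\wedge\d J_i$. None of these is an obstruction; they are the standard technical steps of the textbook proof, and your sketch matches the result as stated in the paper (with the paper's $n$ being your $M$).
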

The variables $J$ and $\theta$ are called action-angle variables.
Theorems 1 and 2 roughly mean that integrable Hamiltonian systems can be written in the following form:
\begin{align*}
	\frac{\d}{\d t}\begin{pmatrix}
		\theta \\ J
	\end{pmatrix}
	= \begin{pmatrix}
		O  & I \\
		-I & O
	\end{pmatrix}
	\begin{pmatrix}
		\frac{\partial \tilde{H}}{\partial \theta} \\[0.2em]
		\frac{\partial \tilde{H}}{\partial J}
	\end{pmatrix}.
\end{align*}
Further, because $\tilde{H}=H \circ \phi$ depends on $I$ only, it holds that
\begin{align*}
	\frac{\d}{\d t}\begin{pmatrix}
		\theta \\ J
	\end{pmatrix}
	= \begin{pmatrix}
		O  & I \\
		-I & O
	\end{pmatrix}
	\begin{pmatrix}
		0 \\
		\frac{\partial \tilde{H}}{\partial J}
	\end{pmatrix}
	=
	\begin{pmatrix}
		\frac{\partial \tilde{H}}{\partial J} \\
		0
	\end{pmatrix}.
\end{align*}
This shows that $J$ is constant, and hence $\theta$ moves on the torus at a constant velocity. Because the velocities are typically not co-related to each other, the dynamics are ``quasi-periodic." See, for example,  \citet{Scott_Dumas2014-gl} for more details.

As seen above, integrable Hamiltonian systems are quasi-periodic.
Note that general Hamiltonian systems are not necessarily quasi-periodic and neither are HNNs.
However, for the HNNs that are trained to model integrable systems, the quasi-periodic behaviors are preferably maintained. 
When the modeling error is sufficiently small, this is considered as a perturbation problem.
The perturbation theory of Hamiltonian systems has been investigated from various perspectives.
For example, perturbed integrable Hamiltonian systems are in general no longer integrable; hence, approximation of integrable Hamiltonian systems by integrable neural network models appears to be difficult.
Fortunately, however, the KAM theory shows that even though the perturbed system is not integrable, it maintains the quasi-periodic behaviors described above under certain conditions.

The KAM theorem has many variants under various conditions. The following variant~\citep{Scott_Dumas2014-gl} is typical: 
\begin{thm}[KAM Theorem]
	Let $\theta$ and $J$ be the action-angle variables for a $C^\infty$ integrable Hamiltonian $H_0: \R^{2M} \to \R$ with $M \geq 2$.
	If $H_0$ is non-degenerate, that is,
	\begin{align}\label{eq:non-degenerate}
		\det \frac{\partial^2 H_0}{\partial J^2} \neq 0,
	\end{align}
	for the perturbed system
	$
		H(\theta, J) = H_0(J) + \varepsilon F(\theta, J, \varepsilon)
	$
	by $F \in C^\infty$, there exists $\varepsilon_0$ such that if $\varepsilon F < \varepsilon_0$, there exists a set of $M$-dimensional tori that are invariant under the perturbed flow. For each invariant torus, the flow of the perturbed system $H$ is quasi-periodic. In addition, the set of invariant tori is large in the sense that its measure becomes full as $\varepsilon \to 0$.
\end{thm}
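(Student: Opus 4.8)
\emph{Proof proposal.} Since this is a classical result, the plan is to reconstruct the standard KAM iteration (a Newton-type scheme controlling small divisors) rather than to invent anything new. Working in the action-angle variables supplied by the Liouville--Arnold theorem (Theorem \ref{thm:LA}), I would write the perturbed system as $H = H_0(J) + \varepsilon F(\theta, J, \varepsilon)$ on $\mathbb{T}^M \times \mathcal{U}$. The frequency map $\omega(J) := \partial H_0/\partial J$ is, by the non-degeneracy hypothesis $\det(\partial^2 H_0/\partial J^2) \neq 0$, a local diffeomorphism, so an invariant torus can be tracked by its frequency vector. First I would fix a target Diophantine frequency $\omega^*$, i.e.\ one satisfying $|\omega^*\cdot k|\geq \gamma|k|^{-\tau}$ for all $k\in\mathbb{Z}^M\setminus\{0\}$ with $\tau>M-1$, and build a single invariant torus carrying that frequency; the measure-theoretic part of the statement then follows by letting $\gamma\to 0$, using that the complement of the Diophantine set has Lebesgue measure $O(\gamma)$ and pulling this estimate back through the frequency diffeomorphism.

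The core of the argument is one step of an accelerated (quadratically convergent) perturbation scheme. I would look for a symplectic change of variables, generated by $J\cdot\theta + \varepsilon W(\theta, J)$, that pushes the angle dependence of the perturbation from order $\varepsilon$ to order $\varepsilon^2$. Substituting and expanding yields the homological equation $\omega(J)\cdot\partial_\theta W = \langle F\rangle_\theta - F$, which is solved termwise in Fourier series, $W_k = F_k/(\mathrm{i}\,\omega(J)\cdot k)$. This is where the small divisors $\omega(J)\cdot k$ enter, and the Diophantine condition on the target frequency (propagated to nearby actions) is exactly what keeps $W$ bounded, at the cost of a loss of regularity --- a shrinking of the analyticity strip in the analytic model, or a finite loss of derivatives handled by smoothing operators in the present $C^\infty$ setting. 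One then estimates that after the transformation the new perturbation has size at most $C\varepsilon^2/\delta^{\nu}$, where $\delta$ is the width lost and $\nu$ a fixed exponent depending only on $M$ and $\tau$.

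Iterating, I would choose a rapidly decreasing sequence $\varepsilon_n$ with $\varepsilon_{n+1}$ comparable to $\varepsilon_n^{2}$ up to the regularity-loss factors, together with a summable sequence of strip widths (or smoothing scales) $\delta_n$, and check that the product of the loss factors stays bounded so that the composed transformations $\Phi_n = \phi_1\circ\cdots\circ\phi_n$ converge on a slightly smaller domain, provided $\varepsilon F$ is smaller than a threshold $\varepsilon_0$ depending on $\gamma,\tau,M$ and $H_0$. The limit conjugates the flow of $H$, restricted to the limiting torus, to the linear flow $\dot\theta=\omega^*$, $\dot J=0$, which is the desired invariant quasi-periodic torus. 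In the $C^\infty$ case I would realize this either via Moser's smoothing technique (approximate $F$ by analytic functions on shrinking strips and run the analytic scheme with matched parameters) or via a Nash--Moser implicit function theorem; either way the $C^\infty$ regularity of $H_0$ and $F$ feeds the required tame estimates.

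The step I expect to be the genuine obstacle is, as always in KAM theory, propagating the small-divisor control through the iteration: one must show that the perturbed frequency $\omega_n(J)$ at step $n$ still satisfies a (slightly weakened) Diophantine inequality on a non-empty Cantor-like set of actions, so that the homological equation remains solvable at every step. This uses the non-degeneracy hypothesis essentially --- it lets one correct the action at each step so that the frequency stays pinned at $\omega^*$ --- and it is the coupling between this frequency bookkeeping, the convergence of the Newton iteration, and (in the smooth case) the bookkeeping of smoothing scales that makes the proof delicate. Everything else reduces to careful but routine Fourier and Cauchy-type estimates.
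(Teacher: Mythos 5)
The paper does not prove this theorem at all: it is quoted as a classical result from the literature (\citealp{Scott_Dumas2014-gl}) and used as a black box, so there is no in-paper argument to compare against. Your sketch is a faithful and essentially correct outline of the standard KAM proof (Diophantine frequency selection, homological equation, quadratically convergent Newton scheme with loss of regularity, Moser smoothing or Nash--Moser for the $C^\infty$ case, and the $O(\gamma)$ measure estimate pulled back through the frequency map), which is exactly the argument the cited reference supplies.
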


\begin{rem}
The last sentence -- {\it the set of invariant tori is large in the sense that its measure becomes full as $\varepsilon \to 0$} -- corresponds to the non-existence of so-called {\it resonance.}
If the perturbation added to the system is in resonance with the original system, the perturbation may grow rapidly and the behavior of the system may change significantly. This statement assures that for small perturbations, such a situation almost never occurs.
\end{rem}

\begin{rem}
It may be difficult to check whether the target system is integrable by using given data. One possibility is application of the Koopman operator, which makes
it possible to find the conserved quantities that the given data may admit. If a sufficient number of conserved quantities exist, it is highly likely that the target system is integrable.
\end{rem}

\section{Main Results} \label{sec:5}
\subsection{Universal Approximation Properties of HNNs}\label{sec:4.1}
For HNNs to be close to the true dynamics, a universal approximation theorem and a generalization error analysis are needed.
First, we show universal approximation theorems.

We first define some notation to describe the theorem.
$C^m(X)$ with the topology of the Sobolev space $W^{p,m}(X)$ is denoted by $S^m_p(X)$, where
$W^{p,m}(X)$ is a space of functions that admit weak derivatives up to the $m$th order that bounds $L^p$-norms.
Hence, $S^m_p(X)$ is the space of functions in $W^{p,m}(X)$ with (usual) derivatives;
for details on the Sobolev theory, see \citet{Adams2003-zt}.
$L^p$-norms of functions are denoted by $\| \cdot \|_{L^p}$, and those of vectors by $\| \cdot \|_p$.


\paragraph{Universal approximation theorem for HNNs}
The following theorem shows the universal approximation property of HNNs. 
\begin{thm}\label{thm:1}
	Let $H: \R^N \to \R$ be an energy function with the equation
	\begin{align*}
		\frac{\d u}{\d t} = S \frac{\partial H}{\partial u},
	\end{align*}
	where $u: t \in \R \mapsto u(t) \in \R^N$ and $S$ is a skew-symmetric $N \times N$ matrix. Suppose that the phase space $K$ of this system is compact and the right-hand side $S {\partial H}/{\partial u}$ is Lipschitz continuous. If the activation function $\sigma \neq 0$ belongs to $S^1_2(\R)$, then for any $\varepsilon > 0$, there exists a neural network $\hnn$ for which
	\begin{align*}
		\left\| S \frac{\partial H}{\partial u} - S \frac{\partial \hnn}{\partial u} \right\|_2 < \varepsilon
	\end{align*}
	holds. In addition, if the energy function is $C^\infty$, the function can be approximated by a $C^\infty$ neural network provided that the activation function is sufficiently smooth.
\end{thm}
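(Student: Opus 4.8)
The plan is to reduce the claim to a classical $C^1$ (Sobolev-type) universal approximation statement for multilayer perceptrons. First, because $S$ is a fixed matrix, $\| S v \|_2 \le \|S\|_2 \| v \|_2$ for every vector $v$, with $\|S\|_2$ the operator norm; so (the case $S=0$ being trivial with $\hnn \equiv 0$) it suffices to produce a network $\hnn$ with $\| \partial H/\partial u - \partial \hnn/\partial u \|_2 < \varepsilon/\|S\|_2$ on the compact phase space $K$. Equivalently, it is enough to approximate the scalar energy $H$ together with its gradient uniformly on $K$, i.e. in the $C^1$ norm on a compact neighborhood of $K$. Note that $\sigma \in S^1_2(\R)$ with $\sigma \neq 0$ forces $\sigma$ to be a $C^1$ function that is not a polynomial (a nonzero polynomial is never in $L^2(\R)$), which is exactly the regularity and non-degeneracy required below.

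Second, I would invoke the known fact that single-hidden-layer networks $u \mapsto \sum_j c_j\,\sigma(w_j \cdot u + b_j)$ — which are among the architectures available to an HNN — are dense in $C^1$ on any compact set whenever $\sigma \in C^1$ is non-polynomial. This is the derivative-approximation theorem of Hornik, Stinchcombe and White, and also follows from the $C^m$-density theorems of Hornik and of Pinkus with $m = 1$. Since $H$ is $C^1$ (as the energy function of the system; the assumed Lipschitz continuity of $S\,\partial H/\partial u$ supplies the needed regularity near $K$), one covers $K$ by a closed ball $B$ on which $H$ is $C^1$, applies the theorem to obtain $\hnn$ with $\| H - \hnn \|_{C^1(B)}$ arbitrarily small, and restricts to $K$; tracking the constants from the first step yields the stated bound.

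For the $C^\infty$ addendum, if $\sigma$ is $C^\infty$ (still non-polynomial, being nonzero and in $S^1_2$) then every such network is itself $C^\infty$, and the same density theorems hold inside the class of $C^\infty$ networks; applying them to a $C^\infty$ energy $H$ produces a $C^\infty$ network approximating $H$ and $\partial H/\partial u$ in the $C^1$ norm (and, if needed for the subsequent KAM argument, in higher $C^k$ norms).

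I expect the main obstacle to be getting this reduction exactly right rather than the final estimate: one cannot directly ``approximate the vector field $S\,\partial H/\partial u$ by a neural network,'' since the gradient of an MLP is not an MLP of the same class, so the argument must go through $C^1$-density of the scalar networks and then differentiate. The delicate points are (i) verifying that the weak regularity actually assumed, $\sigma \in S^1_2(\R)$, still meets the hypotheses of the derivative-approximation theorems — it does, via the non-polynomial observation together with the fact that the approximation is only required on compacta — and (ii) dealing with the possibly irregular shape of $K$ by enlarging it to a ball, which is harmless since $H$ is already defined on all of $\R^N$. A self-contained alternative, avoiding external black boxes, is to mollify $H$ and then approximate the mollified function and its gradient by a Riemann-sum quadrature of a mollifier built from $\sigma$; this is more laborious but elementary.
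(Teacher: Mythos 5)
Your proposal is correct and follows essentially the same route as the paper: reduce the vector-field error to the gradient error via the operator norm $\|S\|_2$, then invoke Hornik's 1990 derivative-approximation theorem (density of networks with $\sigma\in S^1_2(\R)$, $\sigma\neq 0$, in the $W^{1,2}$/\,$C^1$ sense on compacta) applied to the scalar energy $H$, whose regularity comes from the Lipschitz right-hand side. The only cosmetic difference is that the paper works with the $S^1_2(K)$ (Sobolev $L^2$) form of Hornik's density while you use the $1$-uniform ($C^1$) form; both are in the cited reference and your non-polynomiality observation correctly bridges the hypotheses.
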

An outline of the proof is as follows.
From the existence theorem of a solution to ordinary differential equations, the right-hand side of the equation must be Lipschitz continuous to guarantee the existence of a solution. Therefore, the differential of the $H$ is Lipschitz continuous,  which implies the smoothness of $H$. Then, the approximation property is obtained from the universal approximation theorem for smooth functions by \citet{Hornik1990-cg}. For the detailed proof, see the Appendix. 


\paragraph{HNNs with a coordinate transformation}

\begin{figure}[t]
	\centering
	\includegraphics[width=0.5\linewidth]{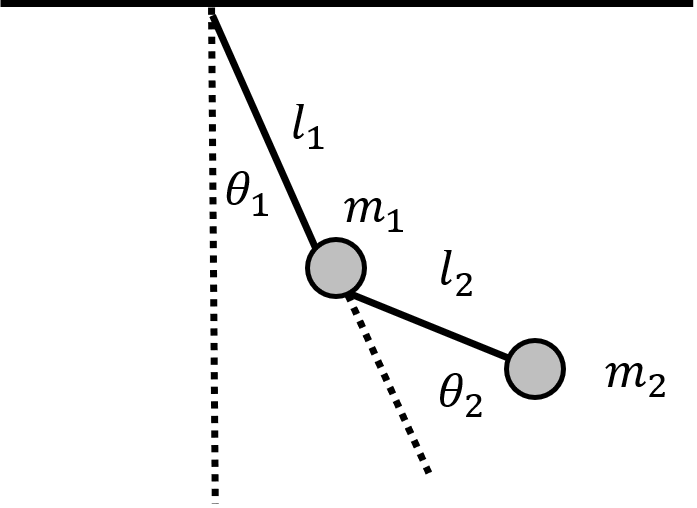}
	\caption{Double pendulum used as the target in the experiment for the illustration of the model with the coordinate transformations.}
	\label{fig:dp}
\end{figure}

The practical use of HNNs is hampered by the fact that the state variables must be represented by a specific coordinate, such as the generalized momentum;
however, the derivation of the generalized momentum requires the energy function, which is unknown. 
For example, the double pendulum in Figure \ref{fig:dp} exhibits the dynamics shown in Figure \ref{fig:double_pendulum_results}.
These are predicted by the models that are trained from the data of the state variables and their derivatives, not those of the generalized momenta.
HNNs failed to solve such problems because the data were not given in the canonical coordinate system. See the Appendix for details.
Based on this, we here propose a model with a coordinate transformation, such as the transformations that appear, for example, in \citet{Rana2020,Jin2020-qd}.

\begin{figure}
	\centering
	\subfigure[Ground truth]{\includegraphics[width=0.45\linewidth]{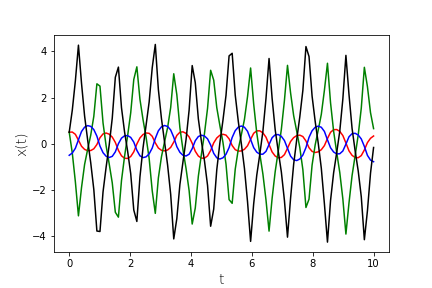}}
	\subfigure[Naive HNN model]{\includegraphics[width=0.45\linewidth]{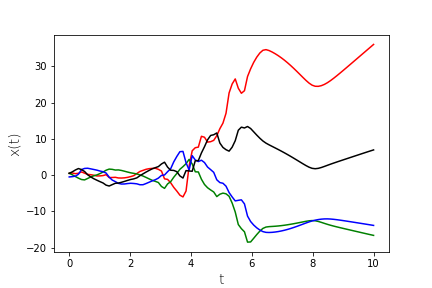}}\\
	\subfigure[HNN model with a coordinate transformation]{\includegraphics[width=0.45\linewidth]{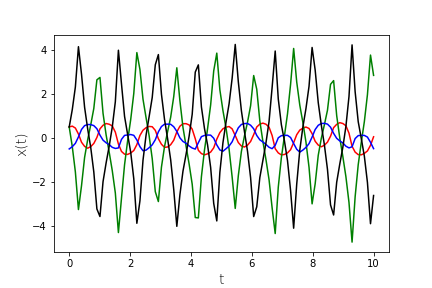}}
	\caption{Examples of the orbits predicted by a HNN and the model with coordinate transformations. Each component of $x(t) = [q_1(t), v_1(t), q_2(t), v_2(t)]$ is represented as red ($q_1$), green ($v_1$), blue ($q_2$), and black ($v_2$).}
	\label{fig:double_pendulum_results}
\end{figure}

Suppose that, although the given data point $x(t)$ is not represented by the canonical coordinate system, the data point $x(t)$ can be transformed into the canonical coordinate system by an unknown transformation $u(t)=u_{\mathrm{NN}}[x(t)]$.
By substituting $u=u_{\mathrm{NN}}(x)$ into the model equation $\eqref{eq:model}$, we obtain
\begin{align}\label{eq:transformed_model}
	\frac{\mathrm{d}x}{\mathrm{d}t}=(\frac{\partial u_{\NN}}{\partial x})^{-1} S (\frac{\partial u_\NN}{\partial x})^{-\top} \frac{\partial H_\NN}{\partial x}.
\end{align}

We show that model \eqref{eq:transformed_model} admits the same energetic property as the original equation and also the universal approximation property.  
\begin{thm}
	The model \eqref{eq:transformed_model} admits the energy conservation law in the sense that $\d \hnn/\d t = 0$.
\end{thm}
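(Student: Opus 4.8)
The plan is to show that $\hnn$, transported along the flow of \eqref{eq:transformed_model} through the change of variables $u=\unn(x)$, is a first integral; this reduces to the elementary fact that a quadratic form built from a skew-symmetric matrix vanishes. Write $\tilde H(x) := \hnn(\unn(x))$ for the Hamiltonian expressed in the new coordinate, and abbreviate $P(x) := \partial \unn/\partial x$ for the Jacobian, which is assumed invertible on the domain of interest so that \eqref{eq:transformed_model} is well posed. By the chain rule, $\partial \tilde H/\partial x = P(x)^\top (\partial \hnn/\partial u)\big|_{u=\unn(x)}$, which is exactly the identification implicitly used when writing $\partial \hnn/\partial x$ in \eqref{eq:transformed_model}.

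First I would differentiate $\tilde H$ along a solution $x(t)$: by the chain rule,
\[
\frac{\d}{\d t}\tilde H(x(t)) = \Bigl(\frac{\partial \tilde H}{\partial x}\Bigr)^{\!\top} \frac{\d x}{\d t}
= \Bigl(\frac{\partial \tilde H}{\partial x}\Bigr)^{\!\top} P^{-1} S\, P^{-\top} \frac{\partial \tilde H}{\partial x}.
\]
Next I would observe that $B(x) := P(x)^{-1} S\, P(x)^{-\top}$ is skew-symmetric for every $x$, since $B^\top = P^{-1} S^\top P^{-\top} = -P^{-1} S\, P^{-\top} = -B$ by skew-symmetry of $S$. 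Finally, for any vector $g$ and any skew-symmetric $B$ one has $g^\top B g = (g^\top B g)^\top = g^\top B^\top g = -g^\top B g$, hence $g^\top B g = 0$; applying this with $g = \partial \tilde H/\partial x$ gives $\d\tilde H/\d t = 0$, i.e. the asserted conservation law $\d\hnn/\d t = 0$. Equivalently, one may remark that \eqref{eq:transformed_model} is a Poisson system $\dot x = B(x)\,\partial\tilde H/\partial x$ whose structure matrix $B(x)=P^{-1}SP^{-\top}$ is the pullback of $S$ under the diffeomorphism $\unn$, and every Poisson (in particular Hamiltonian) system conserves its Hamiltonian by the same argument.

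There is essentially no hard step here; the only points that require care are (i) assuming $\partial\unn/\partial x$ is invertible, so that both the model \eqref{eq:transformed_model} and the congruence $P^{-1}SP^{-\top}$ are meaningful, and (ii) clarifying the mild abuse of notation whereby ``$\hnn$'' in \eqref{eq:transformed_model} denotes the composed function $\hnn\circ\unn$ regarded as a function of $x$, so that ``$\d\hnn/\d t = 0$'' is to be read as: the value of the energy along the transformed trajectory is constant in time. I would state these two conventions explicitly before the short computation above.
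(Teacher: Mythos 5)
Your proposal is correct and is essentially the same argument as the paper's: differentiate the energy along a trajectory via the chain rule, substitute the model equation, and conclude from the skew-symmetry of $S$ (the paper applies $v^\top S v = 0$ directly with $v = (\partial \unn/\partial x)^{-\top}\,\partial \hnn/\partial x$, whereas you equivalently note that the congruence $P^{-1} S P^{-\top}$ is itself skew-symmetric). Your added remarks on the invertibility of the Jacobian and the notational convention for $\hnn$ as a function of $x$ are reasonable clarifications but do not change the substance.
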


\begin{thm}\label{thm:2}
	Let $H: \R^N \to \R$ be an energy function for the equation
	\begin{align*}
		\frac{\mathrm{d}x}{\mathrm{d}t}=(\frac{\partial u}{\partial x})^{-1} S (\frac{\partial u}{\partial x})^{-\top} \frac{\partial H}{\partial x},
	\end{align*}
	where $x: t \in \R \mapsto x(t) \in \R^N$, $u: x \in \R^N \mapsto u(x) \in \R^N$, and $S$ is an $N \times N$ matrix.
	Suppose that the phase space $K$ of this system is compact and the right-hand side ${\partial H}/{\partial u}$ is Lipschitz continuous. Suppose also that $u$ is a $C^1$-diffeomorphism. If the functions $\sigma \neq 0$ and $\rho \neq 0$ belong to $S^1_2(\R)$, then for any $\varepsilon > 0$, there exist neural networks $\hnn$ with the activation functions $\sigma$ and $\unn$ with $\rho$ for which
	\begin{align*}
	&
		\left\|
		(\frac{\partial u}{\partial x})^{-1} S (\frac{\partial u}{\partial x})^{-\top} \frac{\partial H}{\partial x} \right.
		-
		\left. (\frac{\partial \unn}{\partial x})^{-1} S (\frac{\partial \unn}{\partial x})^{-\top} \frac{\partial \hnn}{\partial x}
		\right\|_2 
		\\ &
		< \varepsilon
	\end{align*}
	holds.
\end{thm}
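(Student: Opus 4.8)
The plan is to reduce the statement to $C^1$-approximation of the scalar map $H$ (viewed as a function of $x$) and of the $\R^N$-valued coordinate map $u$, uniformly on the compact phase space $K$, and then to propagate these approximations through the algebraic expression on the right-hand side, using that every factor in it is uniformly bounded on $K$ and that matrix inversion is locally Lipschitz away from singular matrices. The enabling structural facts are: (i) since $u$ is a $C^1$-diffeomorphism, differentiating $u^{-1}\circ u=\mathrm{id}$ shows $\partial u/\partial x$ is invertible at every point, so $x\mapsto\det(\partial u/\partial x)$ is continuous and nowhere zero, hence bounded away from zero on $K$, and $\sup_{x\in K}\|(\partial u/\partial x)^{-1}\|<\infty$; and (ii) since $\partial H/\partial u$ is assumed Lipschitz, arguing as in the proof of Theorem \ref{thm:1} the energy function is $C^1$, so $\partial H/\partial x$ is continuous and $\sup_{x\in K}\|\partial H/\partial x\|<\infty$. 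Collect these bounds, together with $\|S\|$, into a constant $M$.

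Next I would invoke the $C^1$ version of the universal approximation theorem. Because $\sigma\neq 0$ and $\rho\neq 0$ lie in $S^1_2(\R)$, \citet{Hornik1990-cg}, exactly as used for Theorem \ref{thm:1}, provides a network $\hnn$ with activation $\sigma$ whose gradient $\partial\hnn/\partial x$ approximates $\partial H/\partial x$ uniformly on $K$ within any prescribed tolerance, and, applied componentwise to $u$ and stacked, a network $\unn$ with activation $\rho$ whose Jacobian $\partial\unn/\partial x$ approximates $\partial u/\partial x$ uniformly on $K$ within any prescribed tolerance. Note $\unn$ need not be a global diffeomorphism; only uniform closeness of Jacobians on $K$ is used.

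The key remaining step is to upgrade the Jacobian approximation to an approximation of the inverse Jacobian. If $\partial\unn/\partial x$ is close enough to $\partial u/\partial x$ uniformly on $K$, then since $\det$ depends continuously on the matrix entries and $|\det(\partial u/\partial x)|$ is bounded below on $K$, the matrix $\partial\unn/\partial x$ is invertible for all $x\in K$, its inverse is uniformly bounded on $K$, and the identity $A^{-1}-B^{-1}=A^{-1}(B-A)B^{-1}$ shows $(\partial\unn/\partial x)^{-1}$ approximates $(\partial u/\partial x)^{-1}$ uniformly on $K$ within a tolerance controlled by the original one; likewise for the transposed inverses. Writing $P=(\partial u/\partial x)^{-1}$, $P_{\NN}=(\partial\unn/\partial x)^{-1}$, $g=\partial H/\partial x$, $g_{\NN}=\partial\hnn/\partial x$, the difference $PSP^\top g-P_{\NN}SP_{\NN}^\top g_{\NN}$ is a telescoping sum of three terms, in each of which exactly one factor is replaced by $P-P_{\NN}$, $P^\top-P_{\NN}^\top$, or $g-g_{\NN}$ while the others are taken from one of the two systems; since those remaining factors are all bounded on $K$ by $M$ (using that $P_{\NN}$ and $g_{\NN}$ are themselves close to the bounded $P$ and $g$), each term is a bounded quantity times an arbitrarily small one, so choosing the tolerances small enough gives $\|PSP^\top g-P_{\NN}SP_{\NN}^\top g_{\NN}\|_2<\varepsilon$ for every $x\in K$, which is the claim.

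I expect the main obstacle to be the inverse-Jacobian step: one must ensure that the learned Jacobian $\partial\unn/\partial x$ stays nonsingular everywhere on $K$ and that its inverse stays controlled, even though $\unn$ is only an approximation and is a priori not a diffeomorphism. This is exactly where compactness of $K$ and the $C^1$-diffeomorphism hypothesis on $u$ are essential, the latter furnishing the uniform lower bound on $|\det(\partial u/\partial x)|$ that survives small perturbations. Everything else is routine bookkeeping with the triangle inequality and the continuity of matrix inversion.
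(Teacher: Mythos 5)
Your proposal is correct and follows essentially the same route as the paper's own (much terser) proof: approximate $H$ and $u$ together with their first derivatives via the Sobolev version of Hornik's universal approximation theorem, use continuity of the determinant to keep $\partial \unn/\partial x$ nonsingular, and use continuity of matrix inversion plus boundedness on the compact $K$ to propagate the error through the product. You simply supply the quantitative details (the uniform lower bound on $|\det(\partial u/\partial x)|$, the identity $A^{-1}-B^{-1}=A^{-1}(B-A)B^{-1}$, and the telescoping decomposition) that the paper leaves implicit.
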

For proofs, see Appendix. 



\subsection{Generalization Error Analysis of HNNs}
Next, we derive a generalization error bound for the standard HNN \eqref{eq:model} by employing a technique from statistical learning theory. More precisely, we adjust the technique so that an estimate on the energy gradient can be obtained.

\begin{rem}
Although the standard HNN without the coordinate transformations is considered below, the results can be extended to the general energy-based model with the coordinate transformations if the matrix $(\partial u/\partial x)^{-1}$ is bounded.
\end{rem}

In statistical learning theory, generalization error bounds are typically obtained by using the Rademacher complexities. See, for example, \citet{Bousquet2004-kl,Gine2016-zz, Shalev-Shwartz2014-ez, Steinwart2008-ba} for details.
\begin{dfn}
For a set $V \subset \R^n$,
\begin{align*}
\mathcal{R}_n(V) := \frac{1}{n} \mathbb{E}_{\sigma 
\sim \{-1, 1\}^n} \sup_{v \in V} \sum_{i=1}^n \sigma_{i} v_i
\end{align*}
 is called the Rademacher complexity of $V$. 
\end{dfn}

\begin{lem}\label{thm:Rad}
	Let $X$ and $Y$ be arbitrary spaces,
	$\mathcal{F} \subset \{ f:X \to Y \}$ be  a hypotheses class, and $L: Y \times Y \to [0, c]$ be a loss function.
	For a given data set $(x_i, y_i) \in X \times Y \ (i=1,\ldots,n)$, let $\mathcal{G}$ be defined by $\{ (x_i, y_i) \in X \times Y \mapsto L[y_i, h(x_i)] \in \R \mid h \in \mathcal{F}, i=1,\ldots,n\}$. Then, for any $\delta > 0$ and any probability measure $P$, we obtain, with a probability of at least $(1-\delta)$ with respect to the repeated sampling of $P^n$-distributed training data, the following:
	\begin{align*}
&		E[L(Y, h(X))] \leq \\
&		\frac{1}{n} \sum_{i=1}^n L(y_i, h(x_i))
		+ 2 \mathcal{R}_n(\mathcal{G}) + 3 c \sqrt{\frac{2 \ln \frac{4}{\delta}}{n}}
	\end{align*}
	for all $h \in \mathcal{F}$.
\end{lem}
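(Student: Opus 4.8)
The statement is the classical Rademacher-complexity generalization bound; one could simply quote it from the cited references \cite{Bousquet2004-kl,Gine2016-zz,Shalev-Shwartz2014-ez,Steinwart2008-ba}, but I would give the standard self-contained argument by symmetrization and concentration. Write $z_i=(x_i,y_i)$, $S=(z_1,\dots,z_n)$, and for $h\in\mathcal F$ put $\ell_h(z)=L(y,h(x))\in[0,c]$. Let $\mathcal G(S)\subset\R^n$ be the image of $\mathcal F$ under $h\mapsto(\ell_h(z_1),\dots,\ell_h(z_n))$, so that $\mathcal R_n(\mathcal G)$ is the Rademacher complexity of this set in the sense of the definition just above. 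The claimed inequality, \emph{uniformly in $h\in\mathcal F$}, is equivalent to the single statement $\Phi(S)\le 2\mathcal R_n(\mathcal G)+3c\sqrt{2\ln(4/\delta)/n}$ for
\begin{align*}
	\Phi(S)=\sup_{h\in\mathcal F}\Big(\mathbb E[\ell_h(X,Y)]-\tfrac1n\textstyle\sum_{i=1}^n\ell_h(z_i)\Big),
\end{align*}
so it suffices to control $\Phi(S)$ on an event of probability at least $1-\delta$.

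First I would apply McDiarmid's bounded-differences inequality to $\Phi$. Since $L$ takes values in $[0,c]$, replacing one coordinate $z_i$ changes $\frac1n\sum_j\ell_h(z_j)$ by at most $c/n$ uniformly in $h$, hence changes $\Phi(S)$ by at most $c/n$; McDiarmid therefore gives, with probability at least $1-\delta/2$,
\begin{align*}
	\Phi(S)\le\mathbb E_S[\Phi(S)]+c\sqrt{\tfrac{\ln(2/\delta)}{2n}}.
\end{align*}
Next I would bound $\mathbb E_S[\Phi(S)]$ by symmetrization: introduce an independent ghost sample $S'=(z_1',\dots,z_n')$ with the same law, write $\mathbb E[\ell_h(X,Y)]=\mathbb E_{S'}[\frac1n\sum_i\ell_h(z_i')]$, pull $\mathbb E_{S'}$ outside the supremum with Jensen, and insert Rademacher signs $\varepsilon_i$ (absorbed by the distribution-preserving swap $z_i\leftrightarrow z_i'$) to obtain the standard bound
\begin{align*}
	\mathbb E_S[\Phi(S)]\le 2\,\mathbb E_S\big[\mathcal R_n(\mathcal G(S))\big].
\end{align*}

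Finally, the empirical Rademacher complexity $S\mapsto\mathcal R_n(\mathcal G(S))$ again has bounded differences $c/n$, so a second use of McDiarmid gives, with probability at least $1-\delta/2$, $\mathbb E_S[\mathcal R_n(\mathcal G(S))]\le\mathcal R_n(\mathcal G)+c\sqrt{\ln(2/\delta)/(2n)}$. Intersecting the two high-probability events (total failure probability at most $\delta$) and chaining the three displays yields $\Phi(S)\le 2\mathcal R_n(\mathcal G)+3c\sqrt{\ln(2/\delta)/(2n)}$; bounding $\sqrt{\ln(2/\delta)/(2n)}\le\sqrt{2\ln(4/\delta)/n}$ (true for $\delta<1$) gives the stated form, the factor $3$ being the sum of the $c$ from the first McDiarmid step and the $2c$ from doubling the second.

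The argument is routine and there is no single difficult step; the points needing care are (i) that the $c/n$ bounded-differences constant must hold \emph{uniformly} over the (possibly infinite) hypothesis class $\mathcal F$, which is precisely where the hypothesis $L:Y\times Y\to[0,c]$ is used, and (ii) that $\mathcal R_n(\mathcal G)$ in the statement is the \emph{empirical}, data-dependent complexity, so the expected Rademacher complexity produced by symmetrization must be returned to it by the additional concentration step. Measurability of the suprema is handled by the usual conventions (e.g.\ passing to a countable subclass dense for the relevant seminorm), and no structural property of the HNN class is needed here — that enters only later, when $\mathcal R_n(\mathcal G)$ is bounded via covering numbers of the gradient of the multilayer perceptron.
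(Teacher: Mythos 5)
Your argument is correct and is the standard symmetrization-plus-McDiarmid proof; the paper does not prove this lemma itself but quotes it as a classical result from the cited references (Bousquet et al., Shalev-Shwartz and Ben-David, etc.), which establish it by exactly this route, so there is nothing to contrast. Note only that the constant you actually derive, $3c\sqrt{\ln(2/\delta)/(2n)}$, is sharper than the stated $3c\sqrt{2\ln(4/\delta)/n}$, and your final step verifying $\sqrt{\ln(2/\delta)/(2n)}\le\sqrt{2\ln(4/\delta)/n}$ correctly closes the gap to the form given in the lemma.
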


The Rademacher complexity is known to be bounded by using the covering number.

\begin{dfn}
Let $V$ and $V^\prime$ be subsets of $\R^n$. 
$V$ is $r$-covered by $V^\prime$ with respect to the metric function defined by the norm $\| \cdot \|$ 
if for all $v \in V$, there exists a $v^\prime \in V^\prime$ such that
$\| v - v^\prime \| < r$.  The covering number $N(r, V, \| \cdot \|)$ of $V$ is the minimum number of elements of a set that $r$ covers $V$.
$N(r, V, \| \cdot \|)$ is also denoted by $N(r, V)$ if the metric is clear from the context.
\end{dfn}

\begin{lem}
If $\sqrt{\log N(c 2^{-k}, V)} \leq \alpha + k \beta$ for some $\alpha$ and $\beta$, then 
$
    \mathcal{R}_n(V) \leq {6 c(\alpha + 2 \beta)}/{n} .
   $ 
\end{lem}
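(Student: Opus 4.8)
The plan is to prove this by chaining (Dudley's inequality), using dyadic scales $c2^{-k}$ matched to the covering‑number hypothesis, together with Massart's finite‑class lemma, $\mathbb{E}_\sigma\max_{a\in A}\langle\sigma,a\rangle\le\sqrt{2\log|A|}\max_{a\in A}\|a\|_2$ for finite $A\subset\R^n$. Since the Rademacher complexity $\mathcal{R}_n(\cdot)$ and the covering numbers $N(r,\cdot)$ are both invariant under translating $V$, I would first reduce to the case in which $V$ contains a fixed point $v_0$ and lies in the Euclidean ball of radius $c$ about it — consistent with the hypothesis only constraining the scales $c2^{-k}$ for $k\ge0$ — so that the coarsest net may be taken to be $V_0:=\{v_0\}$. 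For each $k\ge1$ let $V_k$ be a minimal $c2^{-k}$‑net of $V$, so $|V_k|=N(c2^{-k},V)=:N_k$, and let $\pi_k(v)\in V_k$ be a nearest point to $v$, with $\pi_0(v):=v_0$. Since $\|v-\pi_k(v)\|_2\le c2^{-k}\to0$, the telescoping identity $v-v_0=\sum_{k\ge1}(\pi_k(v)-\pi_{k-1}(v))$ converges in $\R^n$, and because $\mathbb{E}_\sigma\langle\sigma,v_0\rangle=0$ we obtain $\mathbb{E}_\sigma\sup_{v\in V}\langle\sigma,v\rangle\le\sum_{k\ge1}\mathbb{E}_\sigma\sup_{v\in V}\langle\sigma,\pi_k(v)-\pi_{k-1}(v)\rangle$.

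Next I would bound each term of this sum. By the triangle inequality $\|\pi_k(v)-\pi_{k-1}(v)\|_2\le c2^{-k}+c2^{-(k-1)}=3c\,2^{-k}$, and the increments $\pi_k(v)-\pi_{k-1}(v)$ range over a set of cardinality at most $N_kN_{k-1}\le N_k^2$, using that covering numbers are nonincreasing in the radius. Massart's lemma then gives $\mathbb{E}_\sigma\sup_{v\in V}\langle\sigma,\pi_k(v)-\pi_{k-1}(v)\rangle\le 3c\,2^{-k}\sqrt{2\log(N_k^2)}=6c\,2^{-k}\sqrt{\log N_k}\le 6c\,2^{-k}(\alpha+k\beta)$, where the last step is the assumed entropy bound. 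Summing over $k\ge1$ and using $\sum_{k\ge1}2^{-k}=1$ and $\sum_{k\ge1}k2^{-k}=2$ yields $\mathbb{E}_\sigma\sup_{v\in V}\langle\sigma,v\rangle\le 6c(\alpha+2\beta)$, and dividing by $n$ gives exactly $\mathcal{R}_n(V)\le 6c(\alpha+2\beta)/n$. This is essentially the standard proof of the Dudley entropy‑integral bound, specialized to dyadic nets so that the integral becomes the two geometric sums above.

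The only delicate points are the coarsest‑scale contribution and the constant bookkeeping. One must justify taking $V_0$ to be a single point so that the $k=0$ telescoping term drops out after centering; this is precisely where the normalization that $V$ has radius at most $c$ is used, and without it one would inherit an extra term coming from scale $c$ and the clean constant $6$ would degrade. Moreover, one must combine the factor $3$ from the triangle‑inequality bound on the increment norms with the $\sqrt2$ from Massart's lemma and the passage $N_k\mapsto N_k^2$ so that precisely $6c\,2^{-k}\sqrt{\log N_k}$ — and hence, after the two geometric sums, exactly $6c(\alpha+2\beta)$ — comes out rather than a looser bound. Everything else is routine.
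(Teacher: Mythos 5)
Your proof is correct and is the standard dyadic-chaining argument (Massart's finite-class lemma applied to the telescoping increments between successive $c2^{-k}$-nets); the paper does not prove this lemma itself but states it as a known result from the cited reference \citet{Shalev-Shwartz2014-ez}, where exactly this argument yields the constant $6c(\alpha+2\beta)/n$. Your observation that the clean constant requires $V$ to be contained in a ball of radius $c$ (so the coarsest net can be a single point) is accurate — that normalization is implicit in the paper's statement rather than spelled out.
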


Thus, if the covering number is estimated for a HNN, the bound on the generalization error is obtained.  
To this end, we suppose that the model is trained by minimizing the $p$-norm of the error in the right-hand side of the model. More precisely, for the hypothesis $h: u_j \mapsto  S \frac{\partial \hnn(u_j)}{\partial u}$, we consider the loss function
\begin{align}
L[\nabla H(u_j), h(u_j)] 
= 
\left\|\frac{\partial H(u_j)}{\partial u} - \frac{\partial \hnn(u_j)}{\partial u} \right\|^p_p,
\end{align}
where $u_i$ are training data. We denote the Lipschitz constant of the loss function associated with the above by $\rho_p$. Of course, $p=2$ is typically used; however, we show below that $p > 2M$ is useful to obtain an $L^\infty$ bound on the Hamiltonian.

\begin{rem}
The training can be performed also by using the symplectic gradient: 
\begin{align}
\left\|S \frac{\partial H(u_j)}{\partial u} - S \frac{\partial \hnn(u_j)}{\partial u} \right\|^p_p.
\end{align}
In that case, the results will be slightly modified using the norm of $S$; however, we omit this for simplicity.
\end{rem}

A bound of the covering number is derived as follows.

\begin{thm}\label{thm:gea}
Suppose that the hypotheses class $\mathcal{F}$ consists of 
 multi-layer perceptrons $\fnn$ that have $\rho_{\sigma_j}$-Lispchitz activation functions $\sigma_j (j=1,\ldots,n_l)$, for which the derivatives are $\rho_j^\prime$-Lipschitz continuous and bounded by $\sup |\sigma_j^\prime| < c_{\sigma_j}$. Suppose also that the matrices $A_j^\top (j=1,\ldots,n_{l}+1)$ in the linear layers in the perceptrons have the bounded norm $|A_j^\top| < c_{A_j}$:
 \begin{align*}
& \mathcal{F} = \{ \fnn(u) \mid 
\\
& A_{n_l+1} \sigma_{n_l}(A_{n_l} \sigma_{n_{l-1}}[\cdots \sigma_1(A_1 u + b_1)\cdots ] + b_{n_l}  ) + b_{n_l+1} \},
 \end{align*}
where $b_j$'s are vectors.
Let $\mathcal{G}$ be defined by $\{ L[\nabla H(u_i), h(u_i)] \mid h \in \mathcal{F}\}$ with the $\rho_p$-Lipschitz continuous loss function $L$. 
In addition, suppose that the phase space is compact so that the data $u_i (i=1,\ldots,n)$ are in a bounded set with the bound $\| u_i \| < c_u$.
Then, the covering number of $\mathcal{G}$ is estimated by
\begin{multline*}
 N(\varepsilon, \mathcal{G}) 
\leq \\  
\Bigl( \frac{2 \rho_p c_u c_{A_{n_{l+1}}} \rho^\prime_{\sigma_{n_l}} (\prod_{j=1}^{n_{l}-1} \rho_{\sigma_j}) (\prod_{j=1}^{n_l-1} c_{\sigma_j}) (\prod_{j=1}^{n_l} c_{A_j})^2}{\varepsilon}
\\ 
+ 1\Bigr)^n.
\end{multline*}
\end{thm}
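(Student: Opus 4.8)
The plan is to regard $\mathcal{G}$ as a subset of $\R^n$ and to show that it sits inside an axis-parallel box whose side length is governed by a uniform bound on the energy gradient of the networks in $\mathcal{F}$ over the training points; the covering number of a box is then elementary. First I would fix notation: for $\fnn\in\mathcal{F}$ set $g^{(\fnn)}=\bigl(L[\nabla H(u_1),\nabla\fnn(u_1)],\dots,L[\nabla H(u_n),\nabla\fnn(u_n)]\bigr)\in\R^n$. Because $L$ is $\rho_p$-Lipschitz in its second argument, for any $\fnn,\fnn'\in\mathcal{F}$ one has, coordinate by coordinate,
\[
\bigl|g^{(\fnn)}_i-g^{(\fnn')}_i\bigr|\le\rho_p\bigl\|\nabla\fnn(u_i)-\nabla\fnn'(u_i)\bigr\|_p\le 2\rho_p\sup_{\fnn\in\mathcal{F}}\bigl\|\nabla\fnn(u_i)\bigr\|_p .
\]
Hence $\mathcal{G}$ is contained in a box of side $C:=2\rho_p\Delta$, where $\Delta$ is any uniform bound on $\|\nabla\fnn(u)\|_2$ (hence also on the $\ell_p$ norm, since $p\ge2$) valid for all $\fnn\in\mathcal{F}$ and all $u$ with $\|u\|<c_u$ — the data lie in such a ball by hypothesis.

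The main work is then the estimate of $\Delta$, which I would derive from the backpropagation formula. Differentiating $\fnn(u)=A_{n_l+1}\sigma_{n_l}(A_{n_l}\sigma_{n_l-1}(\cdots\sigma_1(A_1u+b_1)\cdots)+b_{n_l})+b_{n_l+1}$ gives
\[
\nabla\fnn(u)=A_1^\top D_1 A_2^\top D_2\cdots A_{n_l}^\top D_{n_l}A_{n_l+1}^\top,\qquad D_j=\mathrm{diag}\bigl(\sigma_j'(z_j)\bigr),
\]
with $z_j$ the $j$-th pre-activation. Submultiplicativity of the operator norm together with $|A_j^\top|<c_{A_j}$ and $|D_j|\le\sup|\sigma_j'|<c_{\sigma_j}$ for $j\le n_l-1$ gives $\|\nabla\fnn(u)\|\le c_{A_{n_l+1}}\bigl(\prod_{j=1}^{n_l}c_{A_j}\bigr)\bigl(\prod_{j=1}^{n_l-1}c_{\sigma_j}\bigr)\,|D_{n_l}|$. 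For the remaining diagonal factor I would bound $|D_{n_l}|$ not by $\sup|\sigma'_{n_l}|$ but via the Lipschitz continuity of $\sigma'_{n_l}$, $|D_{n_l}|\le\rho'_{\sigma_{n_l}}\|z_{n_l}\|$ (up to the value $\sigma'_{n_l}(0)$), and then control the pre-activation by propagating the input bound forward through the layers, $\|z_{n_l}\|\le\bigl(\prod_{j=1}^{n_l}c_{A_j}\bigr)\bigl(\prod_{j=1}^{n_l-1}\rho_{\sigma_j}\bigr)c_u$. Multiplying the three estimates reproduces exactly
\[
\Delta=c_u\,c_{A_{n_l+1}}\,\rho'_{\sigma_{n_l}}\Bigl(\prod_{j=1}^{n_l-1}\rho_{\sigma_j}\Bigr)\Bigl(\prod_{j=1}^{n_l-1}c_{\sigma_j}\Bigr)\Bigl(\prod_{j=1}^{n_l}c_{A_j}\Bigr)^2 ,
\]
so that $2\rho_p\Delta$ is the numerator in the claim.

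Finally I would cover the box coordinatewise: a segment of length $C$ is $\varepsilon$-covered by at most $\lceil C/\varepsilon\rceil\le C/\varepsilon+1$ points, whence $N(\varepsilon,\mathcal{G})\le(C/\varepsilon+1)^n$ in the supremum norm on $\R^n$ — which also dominates the empirical $L^2$ covering number relevant for the Rademacher-complexity estimates. The hard part will be the gradient bound of the second step: one has to get the telescoping product of operator norms exactly right, and in particular to realize that the last Jacobian factor should be controlled through $\rho'_{\sigma_{n_l}}$ rather than through $\sup|\sigma'_{n_l}|$ — this is precisely what forces the forward-pass bound on the pre-activations, and thus produces the factor $c_u$, the squared product $\bigl(\prod_j c_{A_j}\bigr)^2$, and the product $\prod_j\rho_{\sigma_j}$ that distinguish the stated estimate from the cruder $\bigl(\prod_{j=1}^{n_l+1}c_{A_j}\bigr)\bigl(\prod_{j=1}^{n_l}c_{\sigma_j}\bigr)$. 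Some care with the bias and activation-offset terms, and with the choice $p\ge2$, is also needed so that no hidden-layer-width or input-dimension factor sneaks into $\Delta$; the Lipschitz reduction and the box covering are otherwise routine.
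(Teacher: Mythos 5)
Your route is genuinely different from the paper's. You enclose $\mathcal{G}\subset\R^n$ in a coordinate box of side $2\rho_p\Delta$, where $\Delta$ is an \emph{absolute} bound on $\|\nabla\fnn(u)\|$ valid uniformly over the hypothesis class and the data ball, obtained from the backpropagation product plus a forward-pass estimate of the last pre-activation, and then cover the box coordinatewise. The paper never bounds $\|\nabla\fnn\|$ in absolute terms: it writes $\nabla\fnn(u)=A_1^\top(D\sigma_1)A_2^\top(D\sigma_2)A_3^\top$, treats $u\mapsto D\sigma_2(u)$ as a composition of affine maps with the Lipschitz functions $\sigma_1$ and $\sigma_2'$, and propagates the covering number of the input ball $B_{c_u}$ through that chain using its Lemmas 5 and 6, finally multiplying by the operator-norm bounds of the remaining factors and by $\rho_p$. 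Both computations assemble the same product of constants, and your version has the conceptual merit of discretizing over the hypotheses (which is what $N(\varepsilon,\mathcal{G})$ actually asks for) rather than over the image of the data ball.

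The step you flag as the hard one is, however, where your argument fails to reproduce the stated constant. The inequality $\|D_{n_l}\|\le\rho'_{\sigma_{n_l}}\|z_{n_l}\|$ is false unless $\sigma'_{n_l}(0)=0$: the correct estimate is $|\sigma'_{n_l}(t)|\le|\sigma'_{n_l}(0)|+\rho'_{\sigma_{n_l}}|t|$, and for $\tanh$ (the activation used in the paper's experiments) $\sigma'(0)=1$, so the additive term survives and contributes an extra summand proportional to $c_{A_{n_l+1}}(\prod_{j}c_{A_j})(\prod_{j}c_{\sigma_j})$ to the numerator. Likewise the forward bound $\|z_{n_l}\|\le(\prod_j c_{A_j})(\prod_j\rho_{\sigma_j})c_u$ discards the biases $b_j$ and the offsets $\sigma_j(0)$, each of which adds further terms (potentially carrying width factors). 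As written, your proof therefore yields the theorem's constant only under the extra normalizations $b_j=0$, $\sigma_j(0)=0$, $\sigma'_{n_l}(0)=0$; otherwise the box is strictly larger. This is precisely where the paper's covering-number propagation is more robust: Lipschitz constants of affine maps and of $\sigma_j,\sigma_j'$ are translation invariant, so none of these offsets ever enters its chain. To salvage your version you should either impose those normalizations explicitly or accept the correspondingly enlarged constant. A secondary point: your cover is with respect to the sup norm on $\R^n$, which dominates the normalized empirical $L^2$ metric but not the unnormalized $\|\cdot\|_2$ appearing in the paper's Lemma 5, so a factor of $\sqrt{n}$ can silently enter depending on which metric the chaining bound of Lemma 3 is read in.
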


To prove this theorem, we use the following lemmas, which are typically used to estimate the covering numbers~\cite{Shalev-Shwartz2014-ez}.
\begin{lem}
Let $B$ be a unit ball in $\R^n$. Then,
$N(\varepsilon, B, \| \cdot \|_2) \leq \left( \frac{2}{\varepsilon} + 1\right)^n$.
\end{lem}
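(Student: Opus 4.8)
The plan is to establish this standard bound by a packing argument followed by a volume comparison. First I would fix a maximal $\varepsilon$-separated subset $\{x_1,\ldots,x_m\} \subset B$, that is, a maximal collection of points with $\|x_i - x_j\|_2 \geq \varepsilon$ for all $i \neq j$; such a finite set exists because $B$ is compact. The key observation is that maximality forces this set to be an $\varepsilon$-cover in the strict sense required by the paper's definition: for any $v \in B$, if $\|v - x_i\|_2 \geq \varepsilon$ held for every $i$, then $v$ could be adjoined to the set without destroying the separation property, contradicting maximality; hence some $x_i$ satisfies $\|v - x_i\|_2 < \varepsilon$. This shows $N(\varepsilon, B, \|\cdot\|_2) \leq m$, so it suffices to bound $m$.

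Next I would bound $m$ by comparing volumes of Euclidean balls. Since the centers are pairwise at distance at least $\varepsilon$, the open balls $B(x_i, \varepsilon/2)$ of radius $\varepsilon/2$ are pairwise disjoint: any common point $z$ would give $\|x_i - x_j\|_2 \leq \|x_i - z\|_2 + \|z - x_j\|_2 < \varepsilon$, a contradiction. At the same time, each such ball lies inside the concentric ball $B(0, 1 + \varepsilon/2)$, because $\|x_i\|_2 \leq 1$ and the triangle inequality give $\|y\|_2 \leq \|x_i\|_2 + \|y - x_i\|_2 < 1 + \varepsilon/2$ for every $y \in B(x_i, \varepsilon/2)$.

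Finally I would invoke the scaling of Lebesgue measure. Writing $\omega_n$ for the volume of the unit ball in $\R^n$, disjointness together with the containment yields $m\,(\varepsilon/2)^n \omega_n \leq (1 + \varepsilon/2)^n \omega_n$, so that $m \leq \bigl((1 + \varepsilon/2)/(\varepsilon/2)\bigr)^n = (2/\varepsilon + 1)^n$, which is exactly the claimed bound on $N(\varepsilon, B, \|\cdot\|_2)$.

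The argument has no genuinely hard step; the only points requiring care are bookkeeping ones, so I would flag them rather than treat them as obstacles. First, matching the paper's strict-inequality convention for covers, which I handle through the maximality argument above so that the separated set is automatically a strict $\varepsilon$-cover. Second, the distinction between open and closed balls in the disjointness and containment claims, which I resolve by working with open balls of radius $\varepsilon/2$ throughout: this lets the strict triangle inequalities apply cleanly while leaving the volume factor $(\varepsilon/2)^n$ unchanged.
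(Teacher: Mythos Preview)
Your argument is correct and is the standard volume/packing proof of this bound. The paper itself does not supply a proof of this lemma; it simply states it and cites it as a well-known covering-number estimate (referencing \citet{Shalev-Shwartz2014-ez}), so there is nothing further to compare.
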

\begin{lem}
 Suppose that functions $\phi_i: \R \to \R, \ i = 1,2,\ldots,n$ are $\rho$-Lipschitz continuous. Then, for $V \subset V^n$, 
$
                  N(\varepsilon, \vec{\phi} \circ V) \leq  N(\frac{\varepsilon}{\rho}, V),
                  $
                    where for $v \in \R^n$, 
$
        \vec{\phi}(v) := [\phi_1(v_1),\ldots,\phi_n(v_n)], \
        \vec{\phi} \circ V := \{ \vec{\phi}(v) \mid v \in V\}.
        $
\end{lem}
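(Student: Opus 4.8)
The plan is to show that applying $\vec{\phi}$ to an optimal cover of $V$ produces a cover of $\vec{\phi}\circ V$ with the scale inflated by exactly $\rho$. Concretely, let $m = N(\varepsilon/\rho, V)$ and fix a set $\{v^{(1)},\ldots,v^{(m)}\}$ that $(\varepsilon/\rho)$-covers $V$. I claim that $\{\vec{\phi}(v^{(1)}),\ldots,\vec{\phi}(v^{(m)})\}$ is an $\varepsilon$-cover of $\vec{\phi}\circ V$, which immediately yields $N(\varepsilon, \vec{\phi}\circ V) \leq m$ and hence the asserted bound.

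The heart of the argument is to upgrade the coordinatewise Lipschitz bounds on the $\phi_i$ into a single Lipschitz bound on the vector map $\vec{\phi}$. For any $v, w \in \R^n$ the $i$th component of $\vec{\phi}(v) - \vec{\phi}(w)$ is $\phi_i(v_i) - \phi_i(w_i)$, whose absolute value is at most $\rho |v_i - w_i|$ by the $\rho$-Lipschitz hypothesis. Because the norm $\|\cdot\|$ used to define the covering number is a coordinatewise-monotone $\ell_p$ norm, this pointwise domination propagates to the full vector:
\[
\|\vec{\phi}(v) - \vec{\phi}(w)\| \le \rho\, \|v - w\|.
\]
For $p < \infty$ this follows by raising each coordinate bound to the $p$th power and summing; for $p = \infty$ it follows from the monotonicity of the maximum. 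Thus $\vec{\phi}$ is itself $\rho$-Lipschitz as a map from $(\R^n, \|\cdot\|)$ to $(\R^n, \|\cdot\|)$.

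With this in hand the covering claim is routine. Take any $\vec{\phi}(v) \in \vec{\phi}\circ V$ with $v \in V$; by the covering property there is an index $j$ with $\|v - v^{(j)}\| < \varepsilon/\rho$, and the vector Lipschitz bound then gives $\|\vec{\phi}(v) - \vec{\phi}(v^{(j)})\| \le \rho \|v - v^{(j)}\| < \varepsilon$. Hence every point of $\vec{\phi}\circ V$ lies within $\varepsilon$ of one of the $m$ centers $\vec{\phi}(v^{(j)})$, so these $m$ centers form an $\varepsilon$-cover and $N(\varepsilon, \vec{\phi}\circ V) \le N(\varepsilon/\rho, V)$.

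The only real subtlety, and the step I would flag as requiring care, is the reduction from the coordinatewise Lipschitz estimates to the single vector Lipschitz estimate, i.e. verifying $\|\vec{\phi}(v)-\vec{\phi}(w)\|\le\rho\|v-w\|$ in the chosen norm. Since $\vec{\phi}$ acts diagonally (the $i$th output depends only on the $i$th input), this holds for every $\ell_p$ norm, but one should be explicit about which norm is intended; relatedly, the hypothesis ``$V \subset V^n$'' in the statement is evidently a typo for $V \subset \R^n$. Everything beyond this monotonicity step is an immediate application of the definition of the covering number.
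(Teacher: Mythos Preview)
Your proof is correct and is exactly the standard argument. The paper itself does not supply a proof of this lemma; it merely states it as a known tool and cites \cite{Shalev-Shwartz2014-ez}, so there is nothing to compare against beyond noting that your argument is the textbook one that reference would give. Your observation that ``$V \subset V^n$'' is a typo for ``$V \subset \R^n$'' is also correct.
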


\begin{proof}[Proof of Theorem \ref{thm:gea}]
    To simplify the discussion, we will estimate the covering number of the following perceptron
    \begin{align*}
        \fnn(u) = A_{3} \sigma_{2}[A_{2} \sigma_1(A_1 u + b_1) + b_{2}] + b_{3}.     \end{align*}
    Because the proof for general cases is exactly the same, we need to estimate the covering number of the gradient of $\fnn$, which is written as
    \begin{align*}
        \nabla \fnn(u) = A_1^\top (D \sigma_1) A_2^\top (D \sigma_2) A_3^\top,
    \end{align*}
    where $D \sigma_2$ and $D \sigma_1$ are Jacobian matrices. 
    These Jacobian matrices are evaluated at $u = A_{2} \sigma_1(A_1 u + b_1) + b_{2}$ and $A_1 u + b_1$, respectively. 
    We first estimate the covering number associated with $D \sigma_2$. 
    $D \sigma_2$ has the same architecture as a multi-layer perceptron, except that the last activation function is replaced by the differential $\sigma_2^\prime$ of $\sigma_2$:
    \begin{align*}
    D \sigma_2 = \sigma_2^\prime [A_2 \sigma_1(A_1 u + b_1) + b_{2}].
    \end{align*}
    Assuming that the input data are in the ball $B_{c_u}$ with radius $c_u$, we obtain
    \begin{align*}
    N(\varepsilon, B_{c_u}) \leq \left( \frac{2 c_u}{\varepsilon} + 1\right)^n.
    \end{align*}
    Then, because the norms of $A_1$, $A_2$ are bounded by $c_{A_1}, c_{A_2}$, matrix multiplications by these matrices are $c_{A_1}$- and $c_{A_2}$-Lipschitz continuous, respectively. In addition, $\sigma_1$ is $\rho_1$-Lipschitz and $\sigma_2^\prime$ is $\rho_2^\prime$-Lipschitz continuous. Therefore, the covering number associated with $D \sigma_2$ is estimated by
    \begin{align*}
    N(\varepsilon, D \sigma_2) \leq \left( \frac{2 \rho_{1} \rho^\prime_2 c_{A_1} c_{A_2} c_u}{\varepsilon} + 1\right)^n.
    \end{align*}
    Finally, because $\sigma_1^\prime$ is assumed to be bounded by $c_{\sigma_1}$, the norms of the matrices other than $D \sigma_2$ in $\nabla \fnn$ are bounded as follows:
    $
 \| A_1^\top \| < c_{A_1}, \        
 \| A_2^\top \| < c_{A_2}, \        
 \| A_3^\top \| < c_{A_3}, \        
 \| D \sigma_1 \| < c_{\sigma_1}.     
 $
    Because the loss function is assumed to be $\rho_p$-Lipschitz continuous, we obtain the estimate
     \begin{align*}
    N(\varepsilon, \mathcal{G}) \leq \left( \frac{2 \rho_p \rho_{1} \rho^\prime_2 c_{\sigma_1} (c_{A_1} c_{A_2})^2 c_{A_3} c_u}{\varepsilon} + 1\right)^n.
    \end{align*}   
\end{proof}

\subsection{$L^\infty$ estimate on the error in the Hamiltonian}
The generalization error analysis in Theorem \ref{thm:gea} 
 shows that, at a certain probability, the expectation of the loss function can be bounded.
If this bound certainty holds and if the training is performed by minimizing the $p$-norm with $p > 2M$, we can derive an $L^\infty$ estimate on the Hamiltonian for the standard HNN \eqref{eq:model} by applying the Poincar\'e inequality and the Sobolev inequality under Assumption 1.

\noindent \textbf{Assumption 1}
There exists a density $f_P$ for measure $P$ with $\inf f_P > 0$.


\begin{rem}
The condition $p > 2M$ is not required in practice because of the well-known equivalence of the norms in finite dimensional spaces; for example, if the standard 2-norm is small enough, then the $p$-norm is also small.
However, when the dimension $2M$ is large, the 2-norm needs to be very small to bound the $p$-norm because the constant in the inequality used to bound the $p$-norm depends on the dimension. Therefore, it is preferable to minimize the $p$-norm in such cases.
\end{rem}

\begin{thm}[Poincar\'e inequality]
	Suppose that $1 \leq p \leq \infty$ and $\Omega \subset \R^{2 M}$ is bounded. Then there exists a constant $c_\mathrm{p}$ such that, for any $H \in S_p^1(\Omega)$,
	\begin{align*}
		\int_\Omega |H(u)- \bar{H} |^p \d u \leq c_\mathrm{p} \left \| \frac{\partial H}{\partial u} \right\|_p^p,\
		\bar{H} =\frac{1}{\int_\Omega \d u}\int_\Omega H(u) \d u.
	\end{align*}
	The constant $c_\mathrm{p}$ is called the Poincar\'e constant.
\end{thm}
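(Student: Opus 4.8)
The plan is to reduce to a convex domain and then use a potential estimate, which has the advantage of producing an explicit value of $c_{\mathrm p}$; a soft alternative via the Rellich--Kondrachov theorem is sketched at the end. Since the phase space in the intended application is compact, it is enough to prove the inequality when $\Omega$ is an open ball $B_R$ of radius $R$ containing the phase space, and I would set $G = H - \bar H$, so that $\bar G = 0$. First I would record the classical representation on the convex ball: writing $H(x) - H(y)$ as the line integral of $\partial H/\partial u$ along the segment from $x$ to $y$, averaging over $y \in B_R$, and passing to polar coordinates centered at $x$, one obtains the pointwise bound
\[
|G(x)| \;\le\; \frac{(2R)^{2M}}{2M\,|B_R|}\int_{B_R}\frac{|\partial H/\partial u\,(y)|}{|x-y|^{2M-1}}\,\d y \;=\; C_{R,2M}\,\bigl(k * \mathbf 1_{B_R}|\partial H/\partial u|\bigr)(x),
\]
where $k(z)=|z|^{-(2M-1)}\,\mathbf 1_{\{|z|\le 2R\}}$.

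The key step is an $L^p$ bound for this convolution. Since $2M-1 < 2M$, the Jacobian factor $r^{2M-1}$ from polar coordinates makes the kernel integrable, $k \in L^1(\R^{2M})$, so Young's convolution inequality gives $\|k * f\|_{L^p(B_R)} \le \|k\|_{L^1}\|f\|_{L^p}$ for every $p\in[1,\infty]$; applying it with $f=\mathbf 1_{B_R}|\partial H/\partial u|$ yields $\int_{B_R}|G|^p \le c_{\mathrm p}\int_{B_R}|\partial H/\partial u|^p$ with $c_{\mathrm p}=(C_{R,2M}\|k\|_{L^1})^p$, a constant depending only on $R$, $p$ and $2M$. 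For $p=\infty$ I would argue directly instead: on the convex ball $|H(x)-H(y)|\le \|\partial H/\partial u\|_{L^\infty}|x-y|\le 2R\,\|\partial H/\partial u\|_{L^\infty}$, and averaging over $y$ gives the inequality with $c_{\mathrm p}=2R$.

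I expect the main obstacle to be not any single estimate but the geometry of $\Omega$: for an arbitrary bounded domain the inequality is false (an outward cusp, or disconnectedness, breaks it), so to match the statement as written I would either assume $\Omega$ is bounded, connected and Lipschitz and transfer the ball estimate through a Sobolev extension operator $E\colon W^{p,1}(\Omega)\to W^{p,1}(B_R)$ (which multiplies the constant by $\|E\|^p|B_R|/|\Omega|$), or use the soft route: if the inequality failed for every $c_{\mathrm p}=m\in\mathbb N$ one would get $G_m\in S^1_p(\Omega)$ with $\bar G_m=0$, $\|G_m\|_{L^p}=1$ and $\|\partial G_m/\partial u\|_{L^p}\to 0$; boundedness in $W^{p,1}(\Omega)$ plus Rellich--Kondrachov give $G_m\to G$ in $L^p$ along a subsequence, testing against $C_c^\infty$ shows the weak gradient of $G$ vanishes so $G$ is constant on the connected $\Omega$, and $\bar G=0$ forces $G\equiv 0$, contradicting $\|G\|_{L^p}=1$.
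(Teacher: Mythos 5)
The paper does not prove this statement at all: it is quoted as a classical theorem (the Poincar\'e--Wirtinger inequality) and used as a black box in the $L^\infty$ estimate, so there is no in-paper proof to compare against. Your argument is the standard textbook proof and is essentially correct: the pointwise potential bound on a convex domain (Gilbarg--Trudinger, Lemma 7.16) with the truncated Riesz kernel $k(z)=|z|^{-(2M-1)}\mathbf 1_{\{|z|\le 2R\}}\in L^1$ plus Young's inequality gives the inequality with an explicit constant for all $1\le p\le\infty$, and your separate $p=\infty$ argument is fine. Your observation that the hypothesis ``$\Omega$ bounded'' is literally insufficient is also correct and worth making --- the inequality fails for disconnected or badly cusped domains --- though in the paper's application $\Omega$ is a nice compact phase space (a neighborhood of invariant tori), so the omission is harmless. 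One technical wrinkle in your first repair: a Sobolev extension operator $E$ bounds $\|\nabla(EH)\|_{L^p(B_R)}$ by the \emph{full} $W^{p,1}(\Omega)$-norm of $H$, including $\|H\|_{L^p(\Omega)}$, so transferring the ball estimate through $E$ does not immediately yield a right-hand side containing only the gradient; the usual fixes are a chaining/covering argument over star-shaped pieces or exactly the Rellich--Kondrachov contradiction argument you give as your second route (which itself needs the same boundary regularity for compactness of the embedding, but is otherwise airtight). Since that soft route is complete, the proposal as a whole stands.
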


\begin{thm}[Sobolev inequalities]
There exist constants $c_1, c_2$ such that, if $l p > 2M$,
\begin{align*}
	 & \| e \|_{L^\infty}(\R^{2M}) \leq c \| e \|_{W^{p,l}}(\R^{2M}), 
	 \\ &
	\| e \|_{L^\infty}(\mathbb{T}^{2M}) \leq c \| e \|_{W^{p,l}}(\mathbb{T}^{2M}).
\end{align*}
\end{thm}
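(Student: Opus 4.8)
The statement is the classical Sobolev embedding theorem (see, e.g., \citet{Adams2003-zt}), so the plan is to recall the standard argument, which combines an iterated Gagliardo--Nirenberg--Sobolev inequality with Morrey's inequality and, for the torus, a partition of unity. Write $n := 2M$; every constant produced this way depends only on $n$, $l$, $p$ (and, on $\mathbb{T}^n$, on a fixed finite atlas), which is all that is claimed. Note also that in the application only the case $l=1$, $p>2M$ is needed, where no iteration occurs and the argument reduces to Morrey's inequality alone.

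First I would establish, for $1 \le a < n$ and $v \in C_c^\infty(\R^n)$, the Gagliardo--Nirenberg--Sobolev inequality $\| v \|_{L^{a^*}(\R^n)} \le c(n,a)\, \| \nabla v \|_{L^a(\R^n)}$ with $1/a^* = 1/a - 1/n$. The proof is the usual slicing argument: bound $|v(x)|$ by $\int_{\R} |\partial_i v|\,\d x_i$ for each coordinate direction $i$, multiply the $n$ resulting one-dimensional bounds, integrate over $\R^n$, and apply the generalized Hölder inequality one variable at a time; this gives the case $a=1$ with exponent $n/(n-1)$, and general $a$ follows by inserting a suitable power $|v|^\gamma$ and rearranging. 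Density of $C_c^\infty(\R^n)$ in the Sobolev space then extends the inequality to $W^{a,1}(\R^n)$.

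Next, given $e \in W^{p,l}(\R^n)$ with $lp > n$, I would iterate this estimate on $e$ and all its weak derivatives of order $\le l-1$: one obtains $\| e \|_{W^{p_1, l-1}(\R^n)} \le c\, \| e \|_{W^{p,l}(\R^n)}$ with $1/p_1 = 1/p - 1/n$, and repeating while the current exponent is $< n$ yields $\| e \|_{W^{p_m, l-m}(\R^n)} \le c\, \| e \|_{W^{p,l}(\R^n)}$ with $1/p_m = 1/p - m/n$. Because $lp > n$, a short case analysis (according to whether $n/p$ is an integer, using if necessary that $W^{n,1}(\R^n)$ embeds into every $L^r(\R^n)$ with $r < \infty$) shows that one may stop at some $m \le l-1$ with the resulting exponent strictly larger than $n$ and at least one derivative to spare, i.e. $e \in W^{q, 1}(\R^n)$ for some $q > n$. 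Morrey's inequality
\begin{align*}
	\| e \|_{L^\infty(\R^n)} \le c(n,q)\, \| e \|_{W^{q,1}(\R^n)}
\end{align*}
then closes the Euclidean case; I would prove it in the standard way, estimating the oscillation $|e(x) - (e)_{B(x,r)}|$ of $e$ about its mean over a ball by the Riesz-potential-type integral $c\int_{B(x,r)} |\nabla e(y)|\,|x-y|^{1-n}\,\d y$ and bounding the latter by Hölder's inequality, which converges precisely because $q > n$.

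Finally, for the torus $\mathbb{T}^{n}$ I would reduce to the Euclidean case via a fixed finite atlas: choose a smooth partition of unity $\{ \varphi_k \}$ subordinate to a cover of $\mathbb{T}^n$ by coordinate charts, so that each $\varphi_k e$, read off in its chart and extended by zero, lies in $W^{p,l}(\R^n)$ with $\| \varphi_k e \|_{W^{p,l}(\R^n)} \le c\, \| e \|_{W^{p,l}(\mathbb{T}^n)}$ (the constant absorbing the derivatives of the $\varphi_k$ and of the chart maps). Applying the Euclidean bound to each $\varphi_k e$ and summing, using $e = \sum_k \varphi_k e$, gives $\| e \|_{L^\infty(\mathbb{T}^n)} \le c\, \| e \|_{W^{p,l}(\mathbb{T}^n)}$. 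The only point needing real care is the bookkeeping in the iteration at the borderline exponents — which, as noted, is handled cleanly because the hypothesis $lp > n$ is strict — rather than anything conceptually difficult; the argument is otherwise entirely classical.
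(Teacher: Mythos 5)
Your argument is correct: it is the standard Gagliardo--Nirenberg--Sobolev iteration followed by Morrey's inequality, with a partition of unity reducing the torus case to the Euclidean one, and your handling of the borderline integer case of $n/p$ via the embedding of $W^{n,1}(\R^n)$ into every $L^r(\R^n)$, $r<\infty$, is the usual correct bookkeeping. The paper itself gives no proof of this theorem --- it is quoted as a classical embedding with citations to Adams (2003) and B\'enyi--Oh --- so there is no in-paper argument to compare against; yours is precisely the textbook proof those references contain, and you rightly observe that in the paper's actual application only the case $l=1$, $p>2M$ arises, where everything reduces to Morrey's inequality alone.
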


By using these inequalities along with the invariance of the Hamilton equation under the constant shift of the energy function, we obtain an error bound on the Hamiltonian.

\begin{lem}
	Among the energy functions that yield the target Hamilton equation, we choose the one for which
	\begin{align}\label{eq:meanvalue}
		\int H(u) \d u = \int \hnn(u) \d u
	\end{align}
	holds, so that the error function has zero mean:
	$
		e(u) :=H(u) - \hnn(u), \
		\bar{e}(u) := 0.
	$
	Then, 
	\begin{align*}
		\int_\Omega |e(u)|^p \d u \leq c_\mathrm{p} \left \| \frac{\partial e}{\partial u} \right\|_{L^p}^p.
	\end{align*}
\end{lem}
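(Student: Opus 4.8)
The plan is to obtain this as essentially a one-line consequence of the Poincar\'e inequality once the normalization \eqref{eq:meanvalue} has been justified, the only substantive point being that the Hamilton equation is blind to additive constants in the energy function. First I would record the shift invariance: for any constant $c \in \R$ one has $S\,\partial (H+c)/\partial u = S\,\partial H/\partial u$, so $H$ and $H+c$ define the same target dynamics. Since $\Omega$ is bounded, $\int_\Omega \d u \in (0,\infty)$, and the affine map $c \mapsto \int_\Omega H(u)\,\d u + c\int_\Omega \d u$ is a bijection of $\R$; hence there is exactly one constant for which the chosen representative $H$ satisfies $\int_\Omega H(u)\,\d u = \int_\Omega \hnn(u)\,\d u$. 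We fix this representative, so that \eqref{eq:meanvalue} holds without loss of generality among the energy functions yielding the target equation.

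Second, with \eqref{eq:meanvalue} in force I would compute the mean of the error directly:
\[
\bar e = \frac{1}{\int_\Omega \d u}\int_\Omega e(u)\,\d u = \frac{1}{\int_\Omega \d u}\!\left(\int_\Omega H(u)\,\d u - \int_\Omega \hnn(u)\,\d u\right) = 0 ,
\]
which is precisely the asserted zero-mean property of $e = H - \hnn$.

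Third, I would verify the regularity needed to invoke the Poincar\'e inequality: under the standing hypotheses $H$ is $C^1$ (indeed $C^\infty$ when the target is) and $\hnn$ is a multilayer perceptron whose activation lies in $S_2^1(\R)$, hence $C^1$; therefore $e \in C^1(\Omega)$, and since $\Omega$ is bounded both $e$ and $\partial e/\partial u$ are bounded, so $e \in C^1(\Omega)\cap W^{p,1}(\Omega) = S_p^1(\Omega)$ for every $p\in[1,\infty]$. Applying the Poincar\'e inequality (the theorem above) to $e$ in place of $H$ and substituting $\bar e = 0$ gives
\[
\int_\Omega |e(u)|^p\,\d u \le c_{\mathrm{p}}\left\|\frac{\partial e}{\partial u}\right\|_{L^p}^p ,
\]
which is exactly the claim, with $c_{\mathrm{p}}$ the Poincar\'e constant of $\Omega$ --- in particular independent of $H$ and $\hnn$.

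I do not expect a genuine obstacle here; the two places that require a moment of care are (i) checking that the constant shift really does preserve the target equation, so that selecting the normalized representative is legitimate, and (ii) confirming $e \in S_p^1(\Omega)$ so that the Poincar\'e inequality is applicable. Both follow immediately from assumptions already in place (bounded phase space, activation in $S_2^1$), so the lemma reduces to a direct quotation of the Poincar\'e inequality applied to the zero-mean error function.
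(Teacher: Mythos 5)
Your proposal is correct and follows exactly the route the paper intends: normalize $H$ by a constant shift (which leaves the Hamilton equation unchanged) so that $\bar e = 0$, then apply the stated Poincar\'e inequality to $e$ in place of $H$. The paper treats this as immediate and gives no further detail; your additional checks (uniqueness of the normalizing constant on a bounded $\Omega$, and $e \in S^1_p(\Omega)$ so the inequality applies) are sound and only make the implicit argument explicit.
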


From the above estimate, we obtain
\begin{align*}
	& \int \left\| \frac{\partial H(u)}{\partial u} -  \frac{\partial \hnn(u)}{\partial u} \right\|^p_p \d P
	\\ &
	\leq \frac{1}{n} \sum_{i=1}^n L[Y_i, h(X_i)]
		+ 2 \mathcal{R}_n(\mathcal{G}) + 3 c \sqrt{\frac{2 \ln \frac{4}{\delta}}{n}}.
\end{align*}
By using the density $f_P$ for the measure $P$, we obtain
\begin{align*}
	& \inf f_P \int \left\| \frac{\partial H(u)}{\partial u} -  \frac{\partial \hnn(u)}{\partial u} \right\|^p_p \d u
	\\ 
	&
	\leq
	\int \left\| \frac{\partial H(u)}{\partial u} -  \frac{\partial \hnn(u)}{\partial u} \right\|^p_p \d P,
\end{align*}
which gives us
\begin{align*}
	&\int \left\| \frac{\partial H(u)}{\partial u} -  \frac{\partial \hnn(u)}{\partial u} \right\|^p_p \d u
	\\
	 & \leq
	\frac{1}{\inf f_P} \int \left\| \frac{\partial H(u)}{\partial u} -  \frac{\partial \hnn(u)}{\partial u} \right\|^p_p \d P
	\\
	 &
	\leq
	\frac{1}{\inf f_P} \left(
	\frac{1}{n} \sum_{i=1}^n L[Y_i, h(X_i)]
		+ 2 \mathcal{R}_n(\mathcal{G}) + 3 c \sqrt{\frac{2 \ln \frac{4}{\delta}}{n}}
	\right).
\end{align*}

We note that the left-hand side is the Sobolev norm of the error in $W^{p,l}$; then,  
under the assumption that $p > 2 M$, we can use the Sobolev inequality 
 to obtain
\begin{align*}
& (\sup_u \|  H(u) - \hnn(u) \|)^p 
\leq c^p
\left\| \frac{\partial H(u)}{\partial u} -  \frac{\partial \hnn(u)}{\partial u} \right\|^p_p
\\
& \leq
	\frac{c^p}{\inf f_P} \left(
	\frac{1}{n} \sum_{i=1}^n L[Y_i, h(X_i)]
		+ 2 \mathcal{R}_n(\mathcal{G}) + 3 c \sqrt{\frac{2 \ln \frac{4}{\delta}}{n}}
	\right),
\end{align*}
which ensure that $H$ and $\hnn$ are close in terms of the function values.

\subsection{KAM Theory for HNNs}
The universal approximation property shown in the previous sections guarantees that the value of MSE  can be made arbitrarily small by training; however, in actual training, a finite error remains. 
In this section, as an application of the generalization bound, we apply the KAM theory to theoretically investigate the trained standard HNN model \eqref{eq:model} in such cases by assuming that the target system is integrable. 

We make a few assumptions that are needed for the application of the KAM theory. 

\noindent
\textbf{Assumption 2}\
The dimension of the phase space is assumed to be $2 M$ with $M \geq 2$.

\noindent
\textbf{Assumption 3}\
The target system is an integrable Hamiltonian system with the conserved quantities $F_1,\ldots,F_M$. The series $c_1,\ldots,c_M$ exists such that $K = \cap_{i=1}^M F_i^{-1}(c_i)$ is connected and compact. 

Under the above assumptions, from the Liouville--Arnold theorem there exist a neighborhood $\mathcal{N}$ of $K$, $\mathcal{U} \subset \R^n$ and a coordinate transform
	\begin{align}
		\phi: (\theta, J) \in \mathbb{T}^n \times \mathcal{U} \to \phi(\theta, J) \in \mathcal{N},
	\end{align}
		such that
	the transformed system is the Hamilton equation. 
	Following the usual setting of the KAM theorem, we consider the target system and the Hamiltonian equation in the transformed coordinate $\mathbb{T}^n \times \mathcal{U}$. 

\noindent
\textbf{Assumption 4}\
The Hamiltonian $H: \mathbb{T}^n \times \mathcal{U} \to \R$ of the target system is $C^\infty$ and non-degenerate. 
The activation functions of the HNNs used are in $C^\infty$.

\noindent
\textbf{Assumption 5}\
From the generalization error analysis in the previous section, 
we have essentially shown that 
if $p > 2 M$, with at least probability $1-\delta$, it holds that
\begin{align*}
    \sup |H(u) - \hnn(u)| < c_1 L_{\mathrm{train}} + c_2 R_n + c_3 \sqrt{\frac{\ln \frac{1}{\delta}}{n}}
\end{align*}
with constants $c_1, c_2$, and $c_3$, where $R_n$ is a bound on the Rademacher complexity. We assume that the training was performed with $p> 2M$ and the above statement certainly holds.


Using these assumptions, we obtain Theorem 10. 
\begin{thm}\label{thm:kamforhnn}
	Let the threshold of the KAM theorem be $\varepsilon_0$ and $\delta$ be
	\begin{align*}
	    \delta = \exp\left(- n \left(\frac{\varepsilon_0 - c_1 L_{\mathrm{train}} - c_2 R_n}{c_3}\right)^2 \right).
	\end{align*}
	Under the above assumptions,
	with a probability of at least $(1-\delta)$, a set of invariant tori exists for the trained model $\hnn$.
\end{thm}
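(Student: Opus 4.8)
The plan is to combine the high-probability $L^\infty$ bound of Assumption 5 with the KAM Theorem (Theorem 4) through a substitution that makes the concentration term collapse exactly onto the KAM threshold $\varepsilon_0$. First I would take $\delta = \exp(-n((\varepsilon_0 - c_1 L_{\mathrm{train}} - c_2 R_n)/c_3)^2)$ as in the statement; this is admissible as a probability precisely when $\varepsilon_0 > c_1 L_{\mathrm{train}} + c_2 R_n$, i.e. when the training loss and the Rademacher bound are sufficiently small, which I would record as an implicit hypothesis. For this $\delta$ one has $c_3\sqrt{\ln(1/\delta)/n} = \varepsilon_0 - c_1 L_{\mathrm{train}} - c_2 R_n$, so the inequality of Assumption 5 (which already presupposes $p > 2M$) reads: with probability at least $1-\delta$, $\sup_u |H(u) - \hnn(u)| < c_1 L_{\mathrm{train}} + c_2 R_n + (\varepsilon_0 - c_1 L_{\mathrm{train}} - c_2 R_n) = \varepsilon_0$.

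Next I would transport everything to the action-angle chart. Under Assumptions 2 and 3 the Liouville--Arnold theorem supplies the diffeomorphism $\phi:\mathbb{T}^n\times\mathcal{U}\to\mathcal{N}$ with $H_0 := H\circ\phi$ depending only on $J$, integrable, and (by Assumption 4) $C^\infty$ and non-degenerate. Writing the trained energy in these coordinates as $\hnn\circ\phi$, I decompose $\hnn\circ\phi = H_0 + F$ with $F := \hnn\circ\phi - H_0$; since the activation functions are $C^\infty$, $\hnn$ and hence $F$ are $C^\infty$. Because $\phi$ is merely a change of variables, $\sup_{(\theta,J)}|F(\theta,J)| = \sup_{\mathcal{N}}|\hnn - H| < \varepsilon_0$ on the event of probability at least $1-\delta$. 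Thus on this event the trained model is a $C^\infty$ perturbation of the non-degenerate integrable $H_0$, with perturbation size below $\varepsilon_0$ (taking $\varepsilon = 1$ and $F$ as the perturbation, so $\varepsilon F < \varepsilon_0$).

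Finally, on that same event I apply the KAM Theorem (Theorem 4) to $H_0 + F$: it produces a set of $M$-dimensional tori invariant under the flow of $\hnn$ (indeed, on each of which the flow is quasi-periodic, and whose measure is large). Since this conclusion holds whenever the event of Assumption 5 occurs, it holds with probability at least $1-\delta$, which is the claim.

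The point requiring most care is the notion of smallness used in the KAM step: the version we invoke phrases it as $\varepsilon F < \varepsilon_0$, whereas what Assumption 5 delivers is a sup-norm bound, so one must interpret the KAM threshold in the same ($C^0$-type) sense — or, if a genuine $C^l$ smallness is demanded by the variant of KAM one wishes to use, replace the Sobolev-inequality step behind Assumption 5 by the corresponding $W^{p,l}$ estimate (with $lp > 2M$), which affects only the constants $c_1,c_2,c_3$ and leaves the argument above unchanged. A secondary bookkeeping point is that the $\delta$ in Assumption 5's guarantee and the $\delta$ fixed in the theorem must be the same symbol, so that the cancellation of the $\sqrt{\ln(1/\delta)/n}$ term is an exact identity rather than an inequality, and that one should state the positivity condition $\varepsilon_0 > c_1 L_{\mathrm{train}} + c_2 R_n$ ensuring $\delta\in(0,1)$.
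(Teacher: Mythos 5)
Your proposal is correct and follows essentially the same route as the paper: the paper's own proof is a one-line observation that the stated choice of $\delta$ makes the bound of Assumption 5 collapse to $\sup_u|H(u)-\hnn(u)|<\varepsilon_0$, after which the KAM theorem applies. Your additional remarks --- the implicit requirement $\varepsilon_0 > c_1 L_{\mathrm{train}} + c_2 R_n$ so that $\delta\in(0,1)$, and the gap between the $C^0$ bound delivered by the Sobolev step and the norm in which KAM smallness is properly measured --- are legitimate points that the paper leaves unaddressed, and they sharpen rather than alter the argument.
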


\begin{proof}
    It is confirmed by a straightforward calculation that if $\delta$ is given as described above, it holds that 
    $\sup |H(u) - \hnn(u)| < \varepsilon_0$, and hence the assumption of the KAM theorem is satisfied. 
\end{proof}

\begin{rem}
    As mentioned in Remark 1, the KAM theorem also shows that the invariant tori become larger when the perturbation becomes smaller. Hence, if the generalization error is small enough, the size of the tori is expected to be large. 
\end{rem}

Note that general Hamiltonian systems, and hence general HNNs, are not quasi-periodic. Therefore, a model that approximates a quasi-periodic Hamilton equation may be (in some sense) \textit{approximately} quasi-periodic, but it is not necessarily \textit{strictly} quasi-periodic. This theorem states that \textit{the trained model can be strictly quasi-periodic even if the training loss does not completely vanish.}


\paragraph{Numerical Example: Learning the Zabusky and Kruskal Experiment}

As a numerical experiment, we trained a HNN\footnote{We use the HNN code for the KdV equation provided by \url{https://github.com/tksmatsubara/discrete-autograd} (MIT License).} so that the dynamics of the KdV equation is learned by using the data from the experiment by \citet{Zabusky1965-mn}, in which a nontrivial recurrence of initial states is reported.


The KdV equation is derived from the following energy function $H$:
\begin{align*}
	H(u)=\int \left[\frac{1}{6}\alpha u^3-\frac{1}{2}\beta \left(\frac{\partial u}{\partial x}\right)^2 \right] \d x.
\end{align*}
In fact, under the periodic boundary condition, the variational derivative is
\begin{align*}
	\frac{\delta H}{\delta u} = \int \left[\frac{1}{2}\alpha u^2+\beta \frac{\partial^2 u}{\partial x^2} \right] \d x.
\end{align*}
Then, the time evolution is expressed as a Hamiltonian equation:
\begin{align*}
	\frac{\partial u}{\partial t} = \frac{\partial}{\partial x}\frac{\delta H}{\delta u}= \alpha u\frac{\partial u}{\partial x}+\beta \frac{\partial^3 u}{\partial x^3}.
\end{align*}

For spatial discretization, we used the forward and backward difference operators,
\begin{align*}\arraycolsep=1.1mm
	& D_\mathrm{f}
	:= \frac{1}{\Delta x}\begin{pmatrix}
		-1     & 1      & 0      & \cdots & 0      & 0      & 0      \\
		0      & -1     & 1      & \cdots & 0      & 0      & 0      \\
		0      & 0      & -1     & \cdots & 0      & 0      & 0      \\
		\vdots & \ddots & \ddots & \cdots & \ddots & \ddots & \vdots \\
		0      & 0      & 0      & \cdots & -1     & 1      & 0      \\
		0      & 0      & 0      & \cdots & 0      & -1     & 1      \\
		1      & 0      & 0      & \cdots & 0      & 0      & -1
	\end{pmatrix} 
\mbox{\ and} \\
	& D_\mathrm{b}
	:= \frac{1}{\Delta x}\begin{pmatrix}
		1      & 0      & 0      & \cdots & 0      & 0      & -1     \\
		-1     & 1      & 0      & \cdots & 0      & 0      & 0      \\
		0      & -1     & 1      & \cdots & 0      & 0      & 0      \\
		\vdots & \ddots & \ddots & \cdots & \ddots & \ddots & \vdots \\
		0      & 0      & 0      & \cdots & 1      & 0      & 0      \\
		0      & 0      & 0      & \cdots & -1     & 1      & 0      \\
		0      & 0      & 0      & \cdots & 0      & -1     & 1
	\end{pmatrix},
\end{align*}
respectively.
The central difference operator $D$ is their mean, specifically $D=\frac{1}{2}(D_\mathrm{f}+D_\mathrm{b})$ and that for the second derivative is $D_2 = D_\mathrm{f} D_\mathrm{b} =  D_\mathrm{b} D_\mathrm{f}$. 
Using these difference operators, the equation is semi-discretized as
\begin{align*}
	& H(u) =\sum_x \left[\frac{1}{6}\alpha u^3-\frac{1}{2}\beta \frac{(D_\mathrm{f} u)^2+(D_\mathrm{b} u)^2}{2} \right] \Delta x,\\
	& \frac{\d u}{\d t} = D \frac{\partial H}{\partial u}=D\left(\frac{1}{2}\alpha u^2+\beta D u\right).
\end{align*}

%
Following 
\citet{Zabusky1965-mn}, we set the parameters to $\alpha=-1.0$ and $\beta=-0.022^2$, set the width of phase space to 2.0, and used the initial condition $u(0,x)$ to $u(0,x)=\cos (x\pi)$.
We discretized the system with the spatial and temporal mesh sizes of $\Delta x=0.1$ and $\Delta t=0.01$.
We obtained an orbit for 200 time steps from the initial condition using the fifth-order Dormand--Prince method with the absolute and relative tolerances of $10^{-10}$ and $10^{-8}$.

We performed the experiments on an NVIDIA TITAN V with double precision.
We employed a three-layered convolutional neural network with kernel sizes of 3, 1, and 1.
The number of hidden channels was 200, the number of output channels was 1, the activation function was the $\tanh$ function, and each weight parameter was initialized as a random orthogonal matrix.
We summed up the output in the spatial direction and obtained the global energy.
We used the whole orbit at every iteration, and minimized the mean squared error of the time derivative 
as the loss function using the Adam optimizer with a learning rate of $10^{-3}$ for 10,000 iterations; the error reached a maximum of $1.37\times 10^{-3}$.
Given the true dynamics $u$, the absolute error between the energy function $H$ and the neural network $\hnn$ was $1.31\times 10^{-4}$ on average and $2.51\times 10^{-4}$ at most.

Using the true model and the trained neural network, we also obtained orbits for 1100 time steps from the same initial condition, as shown in the second and third panels of Fig.~\ref{fig:kdvlong}, respectively.
In the top panels, blue and orange lines denote the true state $u$ and the state predicted by the trained neural network $\unn$ at $t=$0.0, 2.0, 4.8, and 9.8.
The bottom panel shows the energy function $H$ given the predicted states $u$ and $\unn$.
Due to the non-zero training error, more waves incur a larger error.
Nonetheless, at around $t=9.8$, the true model and learned neural network reproduce $\sin$ waves, which are given as the initial condition, and the energy error is restored to zero; they exhibit quasi-periodic behaviors.

\begin{figure*}[t]
	\includegraphics[width=1.0\textwidth]{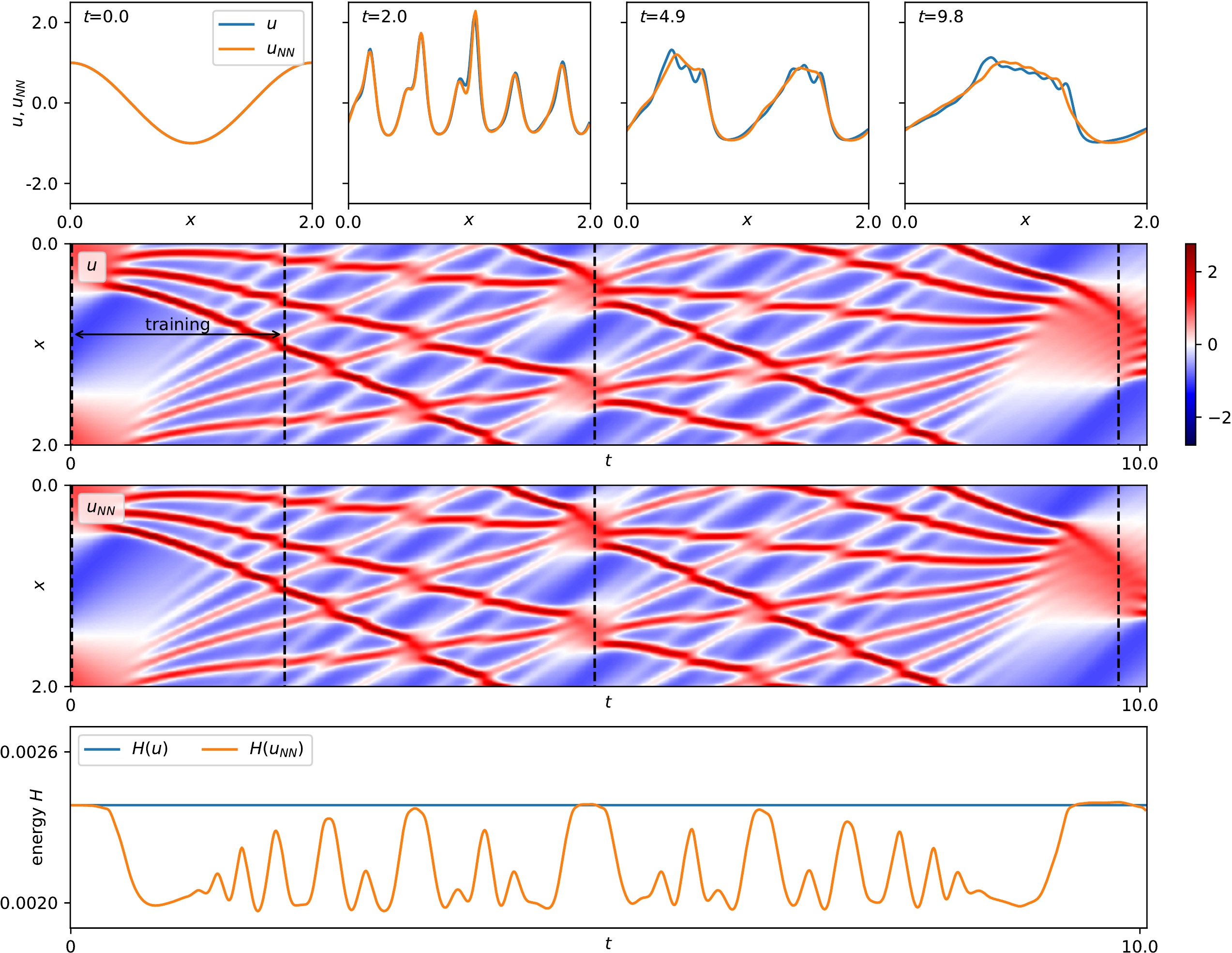}
	\caption{Results of training in the Zabusky and Kruskal experiment~\cite{Zabusky1965-mn}.
		(top panels) The predicted states at $t=$0.0, 2.0, 4.8, and 9.8.
		(second panel) The true dynamics $u$.
		(third panel) The dynamics $\unn$ modeled by a neural network.
		(bottom panel) The energy function $H$ given the true dynamics $u$ and modeled dynamics $\unn$.
	}\label{fig:kdvlong}
\end{figure*}

\section{Concluding Remarks}
We analyzed the behavior of HNNs with non-zero learning errors by combining the KAM theory and statistical machine learning. We investigated the approximation properties of deep energy-based models, including HNNs. More precisely, we proved the persistence of the quasi-periodic behaviors of integrable Hamiltonian systems with a high probability even when the loss function is not perfectly zero.
Further, we provided a generalization error bound and universal approximation theorems for HNNs 
to ensure that the loss function can be sufficiently small for application of the KAM theorem. 
Meanwhile, in the recent research on this type of model, numerically integrated gradients are often used for training. Similar results should be obtained for such cases; however, rigorous discussion is needed.

\bibliography{example_paper}


%
%

\clearpage

\appendix

\section{Energy-Based Physical Models}\label{sec:app_general_models}
Equation \eqref{eq:target} [and its coordinate transformation \eqref{eq:transformed}] and the related model \eqref{eq:model} [and its coordinate transformation \eqref{eq:transformed_model}] describe not only Hamiltonian systems but also various types of other phenomena.
In fact, as shown in the theorem below, models similar to \eqref{eq:target},
\begin{align}\label{eq:model-g}
	\frac{\d u}{\d t} = G \frac{\partial H}{\partial u},
\end{align}
where $G$ is a skew-symmetric or negative-semidefinite matrix, have not only the energy conservation property but also the energy-dissipation property, depending on the matrix $G$. The same result for the models similar to \eqref{eq:transformed} is obtained in a straightforward way.
\begin{thm}\label{thm:10}
	Equation (12)  
	admits the energy conservation law, 
	\begin{align*}
		\frac{\d H}{\d t}
		= 0,
	\end{align*}
	if $G$ is skew-symmetric, and it admits the energy dissipation law,
	\begin{align*}
		\frac{\d H}{\d t}
		\leq 0,
	\end{align*}
	if $G$ is negative-semidefinite.
\end{thm}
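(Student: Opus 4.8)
The plan is to differentiate $H$ along a solution trajectory and reduce the entire statement to the sign of a single quadratic form built from $G$. Concretely, if $u(t)$ solves \eqref{eq:model-g}, then by the chain rule
\begin{align*}
	\frac{\d H}{\d t}
	= \frac{\partial H}{\partial u} \cdot \frac{\d u}{\d t}
	= \frac{\partial H}{\partial u} \cdot G \frac{\partial H}{\partial u}
	= \left( \frac{\partial H}{\partial u} \right)^{\!\top} G \, \frac{\partial H}{\partial u},
\end{align*}
so everything hinges on evaluating $v^\top G v$ with $v = \partial H / \partial u$. I would state this reduction first, noting that $H$ is differentiable along solutions because the right-hand side of \eqref{eq:model-g} is assumed Lipschitz (guaranteeing a $C^1$ solution) and $H$ is smooth in the settings considered here.

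For the skew-symmetric case, I would invoke the elementary fact that a scalar equals its own transpose: $v^\top G v = (v^\top G v)^\top = v^\top G^\top v = - v^\top G v$, which forces $v^\top G v = 0$ for every $v$, hence $\d H / \d t \equiv 0$. For the negative-semidefinite case, the inequality $v^\top G v \leq 0$ for all $v$ is precisely the definition of negative-semidefiniteness, so $\d H / \d t \leq 0$ is immediate. The same two lines apply verbatim to the coordinate-transformed version, because conjugating $G$ into $(\partial u / \partial x)^{-1} G (\partial u / \partial x)^{-\top}$ preserves both skew-symmetry and negative-semidefiniteness; this also generalizes the energy-conservation statement proved earlier for the transformed HNN model.

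Since the argument is essentially a one-line computation, there is no real obstacle: the only points needing a word of justification are the differentiability of $H$ along solutions (handled by the Lipschitz hypothesis and smoothness of $H$, as above) and the two standard linear-algebra facts about quadratic forms. I would therefore present the proof in the order: chain rule, reduction to $v^\top G v$, then dispatch the skew-symmetric and negative-semidefinite cases in turn.
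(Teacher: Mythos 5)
Your proof is correct and follows essentially the same route as the paper's: apply the chain rule to get $\frac{\d H}{\d t} = \frac{\partial H}{\partial u}^\top G \frac{\partial H}{\partial u}$, then conclude from $v^\top G v = 0$ for skew-symmetric $G$ and $v^\top G v \leq 0$ for negative-semidefinite $G$. The extra remarks on differentiability and on the conjugated matrix in the coordinate-transformed model are fine but not part of the paper's argument for this theorem.
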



\begin{proof}[Proof of Theorem \ref{thm:10}]
	We use the chain rule to obtain
	\begin{align}
		\frac{\d H}{\d t} = \frac{\partial H}{\partial u}^\top \frac{\d u}{\d t} = \frac{\partial H}{\partial u}^\top G \frac{\partial H}{\partial u}.
	\end{align}
	Hence, when the matrix $G$ is skew-symmetric, the energy conservation law holds:
	\begin{align*}
		\frac{\d H}{\d t}= 0
	\end{align*}
	because $v^\top G v = 0$ holds for all vectors $v$ when $G$ is skew-symmetric.

	When the matrix $G$ is negative semidefinite, the energy function is monotonically non-increasing:
	\begin{align}
		\frac{\d H}{\d t} =  \frac{\partial H}{\partial u}^\top G  \frac{\partial H}{\partial u} \leq 0.
	\end{align}
\end{proof}

As mentioned above, equations of this form are used not only in Hamiltonian mechanics but also in many fields of mathematical modeling, for example, in the Landau theory and the phase-field method. Phenomena such as phase separation, crystal growth, and crack propagation are modeled using these theories~(e.g., \citet{Caginalp1986-ur, Miehe2010-jh, Steinbach2009-uy, Wheeler1993-ni}).
The above model also includes the semi-discretized partial differential equations. For example, the KdV equation, which is a model of shallow-water waves,
\begin{align*}
	\frac{\partial u}{\partial t} = \alpha u \frac{\partial u}{\partial x} + \beta \frac{\partial^3 u}{\partial x^3},
\end{align*}
where $\alpha, \beta$ are parameters. This equation can be written as
\begin{align}
	\frac{\partial u}{\partial t} = \frac{\partial}{\partial x} \frac{\delta H}{\delta u},\label{eq:hamiltonianpde}
\end{align}
where $H(u, \partial u/\partial x)$ is the Hamiltonian and $\delta H/\delta u$ is the variational derivative of $H$, which is defined by
\begin{align*}
	\frac{\delta H}{\delta u} = \frac{\partial H}{\partial u} - \frac{\partial}{\partial x}\frac{\partial H}{\partial u_x},
\end{align*}
where $u_x$ denotes $\partial u/\partial x$. Equation \eqref{eq:hamiltonianpde} is an example of a Hamiltonian partial differential equation, in which $\partial/\partial x$ in front of $\delta H/\delta u$ plays the role of the skew-symmetric matrix $G$. In fact, if $\partial/\partial x$ is discretized by using a central difference operator,
\begin{align*}
&	\frac{\partial}{\partial x} \simeq 
\\
	& D:= \frac{1}{2 \Delta x}\begin{pmatrix}
		0      & 1      & 0      & \cdots & 0      & 0      & -1     \\
		-1     & 0      & 1      & \cdots & 0      & 0      & 0      \\
		0      & -1     & 0      & \cdots & 0      & 0      & 0      \\
		\vdots & \ddots & \ddots & \cdots & \ddots & \ddots & \vdots \\
		0      & 0      & 0      & \cdots & 0      & 1      & 0      \\
		0      & 0      & 0      & \cdots & -1     & 0      & 1      \\
		1      & 0      & 0      & \cdots & 0      & -1     & 0
	\end{pmatrix}.
\end{align*}
Because the difference matrix $D$ is skew-symmetric, the semi-discretized equation has the form of equation \eqref{eq:transformed_model} with a skew-symmetric matrix $G$. Similarly, an equation with the form
\begin{align}
	\frac{\partial u}{\partial t} = \frac{\partial^2}{\partial x^2} \frac{\delta H}{\delta u},\label{eq:dissipativepde}
\end{align}
is semi-discretized to equation \eqref{eq:transformed_model} with a negative-semidefinite $G$. In fact, if the $\partial^2/\partial x^2$ in front of $\delta H/\delta u$ is discretized,
\begin{align*}
&	\frac{\partial^2}{\partial x^2} 
  \simeq 
\\ 
	& D_2:= \frac{1}{\Delta x^2}\begin{pmatrix}
		-2     & 1      & 0      & \cdots & 0      & 0      & 1      \\
		1      & -2     & 1      & \cdots & 0      & 0      & 0      \\
		0      & 1      & -2     & \cdots & 0      & 0      & 0      \\
		\vdots & \ddots & \ddots & \cdots & \ddots & \ddots & \vdots \\
		0      & 0      & 0      & \cdots & -2     & 1      & 0      \\
		0      & 0      & 0      & \cdots & 1      & -2     & 1      \\
		1      & 0      & 0      & \cdots & 0      & 1      & -2
	\end{pmatrix},
\end{align*}
which is known to be negative semidefinite~\cite{Furihata2010}.

The universal approximatio-genn properties and the generalization error analysis are also applied to this model in a straightforward way under the assumption that $S$ is non-degenerate. 
In fact, the properties of the matrix $S$ used in the theoretical analyses are (1) $S$ is non-degenerate and (2) the norm of $S$ is bounded. If $S$ is a constant matrix, (2) is automatically satisfied and hence (1) is the only assumption of the matrix $S$.
In a part of the theoretical analysis, we consider the errors in the gradient $\nabla H$ of the energy function. However, typically, the loss function of $S \nabla H$ is minimized in the training process, and if $S$ is degenerate, errors in $\nabla H$ may not be small even when those in $S \nabla H$ are small. Hence, $S$ should be non-degenerate.


\section{Proof of Theorem \ref{thm:1}}\label{sec:app_proof1}
In this section we provide a proof of Theorem \ref{thm:1}. More specifically, we prove the following universal approximation theorem for the general energy-based physical models.
\begin{thm}\label{thm:1-gen}
	Let $H: \R^N \to \R$ be an energy function with the equation
	\begin{align*}
		\frac{\d u}{\d t} = G \frac{\partial H}{\partial u},
	\end{align*}
	where $u: t \in \R \mapsto u(t) \in \R^N$ and $G$ is a non-degenerate $N \times N$ matrix. Suppose that the state space $K$ of this system is compact and the right-hand side $G {\partial H}/{\partial u}$ is Lipschitz continuous. If the activation function $\sigma \neq 0$ belongs to $S^1_2(\R)$, then for any $\varepsilon > 0$ there exists a neural network $\hnn$ for which
	\begin{align*}
		\left\| G \frac{\partial H}{\partial u} - G \frac{\partial \hnn}{\partial u} \right\|_2 < \varepsilon
	\end{align*}
	holds. In addition, if the energy function is $C^\infty$, the function can be approximated by a $C^\infty$ neural network provided that the activation function is sufficiently smooth.
\end{thm}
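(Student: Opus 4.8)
The plan is to reduce the statement to two already-available ingredients: first, that the well-posedness hypothesis forces enough regularity on $H$ that its gradient is continuous near $K$; and second, the classical theorem on simultaneous uniform approximation of a function together with its first derivatives by single-hidden-layer networks.

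First I would use that $G$ is non-degenerate. From the identity $\partial H/\partial u = G^{-1}\bigl(G\,\partial H/\partial u\bigr)$ and the fact that left-multiplication by the fixed matrix $G^{-1}$ is Lipschitz with constant its operator norm, the assumed Lipschitz continuity of the right-hand side $G\,\partial H/\partial u$ --- the natural hypothesis, since by the Picard--Lindel\"of theorem it is exactly what guarantees existence and uniqueness of local solutions of the ODE --- passes to $\partial H/\partial u$. Hence $\nabla H$ is Lipschitz, so $H$ is $C^{1,1}$, and in particular $C^1$, near the compact state space $K$. Since $H$ is defined on all of $\R^N$, I may fix a bounded open set $\Omega$ with $K\subset\Omega$ and regard $H$ as an element of $C^1(\overline{\Omega})$.

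Second I would verify that $\sigma\neq 0$ with $\sigma\in S^1_2(\R)$ satisfies the hypotheses of the simultaneous-approximation theorem of \citet{Hornik1990-cg}. By definition of $S^1_2(\R)$, $\sigma$ is $C^1$; moreover $\sigma\in S^1_2(\R)\subset W^{2,1}(\R)$, so $\sigma$ and $\sigma'$ lie in $L^2(\R)$, and the one-dimensional Sobolev embedding $W^{2,1}(\R)\hookrightarrow C_0(\R)$ makes $\sigma$ bounded and vanishing at infinity; finally a nonzero element of $L^2(\R)$ cannot be a polynomial, so $\sigma$ is non-polynomial. The theorem then provides single-hidden-layer networks dense in $C^1(\overline{\Omega})$ in its natural topology. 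Given $\varepsilon>0$ I pick such a network $\hnn$ with $\sup_{u\in\overline{\Omega}}\bigl(|H(u)-\hnn(u)| + \|\partial H/\partial u - \partial\hnn/\partial u\|_2\bigr) < \varepsilon/(1+\|G\|)$, where $\|G\|$ denotes the operator norm; then for every $u\in K$, $\|G\,\partial H/\partial u - G\,\partial\hnn/\partial u\|_2 \le \|G\|\,\|\partial H/\partial u - \partial\hnn/\partial u\|_2 < \varepsilon$, which is the assertion.

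For the $C^\infty$ addendum, when $H\in C^\infty$ and $\sigma$ is $C^\infty$ (still non-polynomial and with whatever mild growth or decay the cited theorem demands --- this is the ``sufficiently smooth'' in the statement), the $C^m$ version of the same density result yields, for any chosen $m$, a network approximating $H$ in $C^m(\overline{\Omega})$; taking $m=1$ already suffices for the estimate above, and the resulting single-hidden-layer network is itself $C^\infty$, being a finite composition of affine maps with the $C^\infty$ function $\sigma$. I do not anticipate a genuine obstacle: the closest thing to one is the bookkeeping of domains --- the hypothesis delivers Lipschitzness of the vector field on the possibly irregular compact set $K$, and one must pass to the open neighbourhood $\Omega$, on which the globally defined, $C^1$ function $H$ still lives, to apply Hornik's theorem, then restrict back to $K$; verifying Hornik's hypotheses from $\sigma\in S^1_2(\R)\setminus\{0\}$ and the final norm estimate are routine.
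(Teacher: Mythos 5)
Your proof is correct, but it takes a genuinely different route from the paper's. The paper never upgrades $H$ to $C^1$: it observes only that $G\,\partial H/\partial u$, being Lipschitz and hence continuous on the compact phase space, is bounded and square-integrable, so that $H \in S^1_2(K)$, and then invokes the Sobolev-density half of Hornik et al.'s results (Lemma~\ref{lem:hornik}) to produce $\hnn$ with $\| H - \hnn\|^2_2 + \|\partial H/\partial u - \partial \hnn/\partial u\|_2^2 < \varepsilon^2/\|G\|_2^2$ in the $S^1_2(K)$ sense, finishing with the same operator-norm estimate you use. You instead exploit the non-degeneracy of $G$ to transfer Lipschitz continuity from $G\,\partial H/\partial u$ to $\nabla H$ itself, conclude $H$ is $C^{1,1}$ near $K$, and apply the uniform ($m$-uniformly dense) half of Hornik's theorem to get a sup-norm bound on $K$. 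Your conclusion is the stronger one on a compact domain (uniform control implies the paper's $L^2$ control up to a volume factor), and your explicit use of $G^{-1}$ makes honest a step the paper leaves implicit when it passes from $G\,\partial H/\partial u \in S^0_2$ to $H \in S^1_2$. The one thing to watch is that the uniform-density statement quoted in the paper (Theorem~\ref{thm:hornik}) is phrased for $C^\infty$ targets, while your $H$ is only $C^{1,1}$; to use it literally you should either first mollify $H$ to a $C^\infty$ function close to it in $C^1(\overline{\Omega})$ or cite the $C^m$-target version of Hornik et al.'s theorem---both routine, and consistent with the domain bookkeeping you already flag.
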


To prove this theorem, we use the following theorem and the lemma, both of which were shown   
in \citet{Hornik1990-cg}.
\begin{thm}[Hornik et al., 1990]\label{thm:hornik}
	Let $\Sigma(\sigma)$ be the space of the neural networks with the activation function $\sigma$. 
	If the activation function $\sigma \neq 0$ belongs to $S^m_p(\R, \lambda)$ for an integer $m \geq 0$, then $\Sigma(\sigma)$ is $m$-uniformly dense in $C^{\infty}(K)$, where $K$ is any compact subset of $\R^N$.
\end{thm}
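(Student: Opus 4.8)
The plan is to use the non-degeneracy of $G$ to transfer the Lipschitz hypothesis from the full vector field onto $\partial H/\partial u$, to smooth $H$ so that the Hornik--Stinchcombe--White density result becomes applicable, and then to invoke Theorem \ref{thm:hornik} in the first-order topology. First I would note that, because $G$ is non-degenerate, $G^{-1}$ is a well-defined constant matrix and multiplication by it is a Lipschitz map. Writing $\partial H/\partial u = G^{-1}(G\,\partial H/\partial u)$ and composing with the Lipschitz map $G^{-1}$ shows that the assumed Lipschitz continuity of $G\,\partial H/\partial u$ forces $\partial H/\partial u$ to be Lipschitz continuous on $K$; in particular $\partial H/\partial u$ is continuous, so $H \in C^1(K)$.

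The obstacle is that Theorem \ref{thm:hornik} is stated for targets in $C^\infty(K)$, whereas the argument above only yields $H \in C^1$. To bridge this I would mollify: let $\psi_\eta$ be a standard smooth mollifier and set $H_\eta := H * \psi_\eta$, which is $C^\infty$ on a fixed compact neighborhood of the compact set $K$ once $\eta$ is small. Since $H \in C^1$, the classical mollification estimates give both $H_\eta \to H$ and $\partial H_\eta/\partial u \to \partial H/\partial u$ uniformly on $K$ as $\eta \to 0$, that is, $H_\eta \to H$ in the first-order uniform topology (which dominates the $S^1_2(K)$ topology on the compact $K$). I would then fix $\eta$ so that $\sup_{u \in K}\|\partial H/\partial u - \partial H_\eta/\partial u\|_2 < \varepsilon/(2\|G\|)$.

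Now, since $\sigma \neq 0$ belongs to $S^1_2(\R)$, Theorem \ref{thm:hornik} with $m=1$ guarantees that the network class $\Sigma(\sigma)$ is $1$-uniformly dense in $C^\infty(K)$. Applying this to the smooth surrogate $H_\eta$ produces a neural network $\hnn$ whose gradient satisfies $\sup_{u \in K}\|\partial H_\eta/\partial u - \partial\hnn/\partial u\|_2 < \varepsilon/(2\|G\|)$. A triangle inequality then bounds $\sup_{u \in K}\|\partial H/\partial u - \partial\hnn/\partial u\|_2$ by $\varepsilon/\|G\|$, and multiplying by the constant (hence bounded) matrix $G$ yields $\|G\,\partial H/\partial u - G\,\partial\hnn/\partial u\|_2 \le \|G\|\,\|\partial H/\partial u - \partial\hnn/\partial u\|_2 < \varepsilon$ uniformly on $K$, which is the claim. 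For the final assertion, if $H$ is already $C^\infty$ the smoothing step is skipped and Theorem \ref{thm:hornik} is applied to $H$ directly; taking a sufficiently smooth (e.g.\ $C^\infty$) activation function makes every element of $\Sigma(\sigma)$ a composition of smooth affine and activation maps, so $\hnn$ is itself a $C^\infty$ neural network.

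I expect the main subtlety to lie in the smoothing step: Hornik's theorem only sees $C^\infty$ targets, so the whole scheme rests on producing a smooth surrogate that is close to $H$ \emph{together with its gradient}. This is precisely where the Lipschitz-gradient regularity from the first step is essential -- it upgrades $H$ to $C^1$, which is exactly the regularity that makes mollification converge in the first-order topology rather than only uniformly at the level of function values.
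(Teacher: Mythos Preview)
Your proposal does not address the stated theorem. The statement you were given is Theorem~\ref{thm:hornik} (the Hornik--Stinchcombe--White density result), which the paper merely \emph{cites} from \citet{Hornik1990-cg} and does not prove; it is used as an external black box. What you have written is instead a proof of Theorem~\ref{thm:1} (equivalently its generalization, Theorem~\ref{thm:1-gen}): you assume a Lipschitz vector field $G\,\partial H/\partial u$, a non-degenerate $G$, and then \emph{invoke} Theorem~\ref{thm:hornik} to approximate $H$. So your argument presupposes the very statement you were asked to establish. If the target were genuinely Theorem~\ref{thm:hornik}, an actual proof would have to reproduce the Hornik--Stinchcombe--White argument (moment conditions on $\sigma$, ridge-function approximation, etc.), none of which appears in your write-up.

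That said, as a proof of Theorem~\ref{thm:1-gen} your argument is correct, and it differs from the paper's own proof in an instructive way. The paper does not use Theorem~\ref{thm:hornik} at all; it uses Lemma~\ref{lem:hornik}, which asserts density of $\Sigma(\sigma)$ directly in the Sobolev space $S^m_p$. Thus the paper only needs $H\in S^1_2(K)$, which follows immediately from continuity of $\partial H/\partial u$ on the compact $K$, and then Lemma~\ref{lem:hornik} furnishes $\hnn$ with $\|H-\hnn\|_{S^1_2}$ small; this yields an $L^2$ bound on the gradient error. You instead route through Theorem~\ref{thm:hornik}, whose targets must lie in $C^\infty(K)$; this forces you to insert a mollification step $H\mapsto H_\eta$ and to use the $C^1$ regularity of $H$ to control the mollified gradient. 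Your reward is a genuinely stronger conclusion (uniform rather than $L^2$ gradient approximation on $K$), at the cost of the extra smoothing machinery. The paper's route is shorter and avoids mollifiers entirely, but delivers only the Sobolev-norm bound.
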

\begin{lem}[Hornik et al., 1990]\label{lem:hornik}
	Under the same assumption,
	$\Sigma(\sigma)$ is also dense in $S^m_p(\R, \lambda)$.
\end{lem}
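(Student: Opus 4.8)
The plan is to upgrade the $m$-uniform approximation on compacta supplied by Theorem \ref{thm:hornik} to approximation in the Sobolev topology of $S^m_p(\R^N,\lambda)$. Recall that the $W^{p,m}(\lambda)$ norm is $\|f\|_{m,p}^p=\sum_{|\alpha|\le m}\int |D^\alpha f|^p\,\d\lambda$, a finite sum of $L^p(\lambda)$ norms of the derivatives of order at most $m$. Since $\Sigma(\sigma)\subset C^m$ under the hypothesis $\sigma\in S^m_p(\R)$, it suffices to show that for every $f\in S^m_p$ (i.e. $f\in C^m$ with finite Sobolev norm) and every $\varepsilon>0$ there is a network $g\in\Sigma(\sigma)$ with $\|f-g\|_{m,p}<\varepsilon$. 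The underlying idea is that $m$-uniform closeness on a large compact set translates directly into smallness of each $\int|D^\alpha(f-g)|^p\,\d\lambda$, provided the mass of $\lambda$ is essentially concentrated on that compact set.

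First I would reduce to compactly supported targets. Because $\lambda$ is a finite Borel measure it is tight, so for any $\delta>0$ there is a compact $K$ with $\lambda(\R^N\setminus K)<\delta$. Multiplying $f$ by a smooth cutoff $\chi$ that equals $1$ on $K$ and vanishes outside a slightly larger compact set produces $\tilde f=\chi f$; by the product rule the derivatives $D^\alpha\tilde f$ differ from $D^\alpha f$ only through terms supported in the tail, where the derivatives of $\chi$ are bounded and each $D^\beta f\in L^p(\lambda)$, so dominated convergence gives $\|f-\tilde f\|_{m,p}<\varepsilon/2$ once $K$ is large enough. It therefore suffices to approximate $\tilde f$, whose support is compact.

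Next I would apply Theorem \ref{thm:hornik} on a compact set $K'$ containing $\mathrm{supp}\,\tilde f$: it yields a network $g\in\Sigma(\sigma)$ with $\sup_{x\in K'}|D^\alpha(\tilde f-g)(x)|<\eta$ simultaneously for all $|\alpha|\le m$. Restricting the integrals to $K'$ then gives $\int_{K'}|D^\alpha(\tilde f-g)|^p\,\d\lambda\le \eta^p\,\lambda(K')$, and summing over the finitely many multi-indices with $|\alpha|\le m$ shows that the $K'$-part of $\|\tilde f-g\|_{m,p}^p$ can be made arbitrarily small by taking $\eta$ small. Combined with the cutoff reduction, this finishes the argument on $K'$.

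The delicate point — and the step I expect to be the main obstacle — is the contribution of the tail $\R^N\setminus K'$, where $\lambda$ may still carry positive (though small) mass. There the network $g$ is only required to approximate $\tilde f$ on $K'$ and is a priori unbounded, so $\int_{\R^N\setminus K'}|D^\alpha g|^p\,\d\lambda$ is not controlled by the uniform error $\eta$. The cleanest resolution, and the one relevant to this paper, is to note that the phase space is compact and the data measure $\lambda$ has compact support: then $\R^N\setminus\mathrm{supp}\,\lambda$ is $\lambda$-null, the tail disappears entirely, and $m$-uniform closeness on $\mathrm{supp}\,\lambda$ immediately yields Sobolev closeness through the bound $\eta^p\,\lambda(\mathrm{supp}\,\lambda)$. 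For a fully general finite $\lambda$ one must additionally produce the approximating networks with a uniform tail bound (e.g. by composing with a fixed cutoff or by exploiting the structure of $\sigma$), which is precisely the extra ingredient required to close the argument in complete generality.
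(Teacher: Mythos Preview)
The paper does not give its own proof of this lemma: it is quoted directly from \citet{Hornik1990-cg} and used as a black box in the proof of Theorem~\ref{thm:1}. So there is no paper proof to compare against beyond the citation.

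Your sketch follows the standard route (and essentially the one in Hornik et al.): reduce to compactly supported targets by tightness of the finite measure, then upgrade $m$-uniform approximation on a compact set to $W^{p,m}(\lambda)$-approximation via the trivial bound $\int_{K'}|D^\alpha(\tilde f-g)|^p\,\d\lambda\le\eta^p\lambda(K')$. You have also correctly isolated the genuine obstacle, namely controlling $\int_{\R^N\setminus K'}|D^\alpha g|^p\,\d\lambda$, since the network $g$ is not constrained off $K'$. Your proposed fix for the paper's setting---observing that the phase space, and hence the support of the data measure, is compact---is legitimate and is exactly what the paper needs downstream; but as a proof of the lemma \emph{as stated} (for general finite $\lambda$) it leaves a gap, which you acknowledge. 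In Hornik et al.\ the tail is handled by a more careful choice of the approximating network together with integrability properties inherited from $\sigma\in S^m_p$; your final sentence gestures at this but does not carry it out. So: adequate for the paper's purposes, but not a complete proof of the general lemma.
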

Hence, if the activation function $\sigma$ of the hidden layer is in $S^m_p(\R, \lambda)$ and does not vanish everywhere, then for any sufficiently smooth function, there exists a neural network that approximates the function and its derivatives up to the order $m$ arbitrarily well on compact sets. This theorem has also been extended to the functions of multiple outputs; see~\citet{Hornik1990-cg}.
\begin{proof}[Proof of Theorem \ref{thm:1}]
	Because the target equation is determined only by the gradient of $H$, any function obtained by shifting $H$ by a constant gives the same equation. Hence, we choose and fix an energy function $H$ that yields the target equation.
	Because  $G {\partial H}/{\partial u}$ is Lipschitz continuous and hence continuous on the phase space $K$, this function is bounded and square-integrable. Thus, $G {\partial H}/{\partial u} \in S^0_2(K)$, which means $H$ is in $S^1_2(K)$. Therefore, from Lemma \ref{lem:hornik} and the assumption that the activation function is in $S^1_2(\R)$, for each $\varepsilon$, there exists a neural network that approximates $H$ in $S^1_2(K)$:
	\begin{align*}
		\left\| H - \hnn\right\|^2_2 + \left\|\frac{\partial H}{\partial u} - \frac{\partial \hnn}{\partial u} \right\|_2^2 < \frac{\varepsilon^2}{\| G\|^2_2},
	\end{align*}
	which gives
	\begin{align*}
		\left\| G \frac{\partial H}{\partial u} - G \frac{\partial \hnn}{\partial u} \right\|_2^2
		\leq \|G\|^2_2 \left\| \frac{\partial H}{\partial u} - \frac{\partial \hnn}{\partial u} \right\|_2^2
		< \varepsilon^2.
	\end{align*}
\end{proof}

\section{Proofs of Other Theorems}\label{sec:app_proof2}
\paragraph{Proof of Theorem 5}
\begin{proof}
	By substituting the equation, we obtain
	\begin{align*}
		\frac{\d H_\NN}{\d t} & = \frac{\partial H_\NN}{\partial x}^\top \frac{\d x}{\d t}
		\\ &
		= \frac{\partial H_\NN}{\partial x}^\top \frac{\partial u_\NN}{\partial x}^{-1} S \frac{\partial u_\NN}{\partial x}^{-\top} \frac{\partial H_\NN}{\partial x}
		 = 0
	\end{align*}
	because $S$ is skew-symmetric and hence for any vector $v$, $v^\top S v = 0$.
\end{proof}

\paragraph{Proof of Theorem \ref{thm:2}}
We prove the following theorem, which is a generalization of Theorem \ref{thm:2}.
\begin{thm}\label{thm:2-gen}
	Let $H: \R^N \to \R$ be an energy function for the equation
	\begin{align*}
		\frac{\mathrm{d}x}{\mathrm{d}t}=(\frac{\partial u}{\partial x})^{-1} G (\frac{\partial u}{\partial x})^{-\top} \frac{\partial H}{\partial x},
	\end{align*}
	where $x: t \in \R \mapsto x(t) \in \R^N$, $u: x \in \R^N \mapsto u(x) \in \R^N$, and $S$ is an $N \times N$ non-degenerate matrix.
	Suppose that the phase space $K$ of this system is compact and the right-hand side ${\partial H}/{\partial u}$ is Lipschitz continuous. Suppose also that $u$ is a $C^1$-diffeomorphism. If the functions $\sigma \neq 0$ and $\rho \neq 0$ belong to $S^1_2(\R)$, then for any $\varepsilon > 0$ there exist neural networks $\hnn$ with the activation functions $\sigma$ and $\unn$ with $\rho$ for which
	\begin{multline*}
		\left\|
		(\frac{\partial u}{\partial x})^{-1} G (\frac{\partial u}{\partial x})^{-\top} \frac{\partial H}{\partial x} \right.
\\
		-
		\left. (\frac{\partial \unn}{\partial x})^{-1} G (\frac{\partial \unn}{\partial x})^{-\top} \frac{\partial \hnn}{\partial x}
		\right\|_2 
		< \varepsilon
	\end{multline*}
	holds.
\end{thm}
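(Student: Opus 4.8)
The plan is to follow the template of the proof of Theorem~\ref{thm:1-gen}, but to invoke the Sobolev-space universal approximation results of Hornik (Theorem~\ref{thm:hornik} and Lemma~\ref{lem:hornik}) \emph{twice} — once for the energy function $H$ and once for the coordinate map $u$ — and then to control how these two approximation errors propagate through the algebraic expression $(\partial u/\partial x)^{-1} G (\partial u/\partial x)^{-\top}(\partial H/\partial x)$. The only genuinely new ingredient compared with Theorem~\ref{thm:1-gen} is the matrix inverse $(\partial u/\partial x)^{-1}$, so most of the work is in handling it.

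First I would establish the required regularity and produce the two networks. Pulling the dynamics back through the diffeomorphism $u$ turns the equation into the genuine Hamilton equation $\d u/\d t = G\,\partial H/\partial u$; since its right-hand side is Lipschitz continuous on the compact phase space $K$, $\partial H/\partial u$ is bounded and square-integrable there, so $H\in S^1_2(K)$ (and $H$ is smooth when it is assumed $C^\infty$). By Lemma~\ref{lem:hornik}, for any tolerance there is a network $\hnn$ with activation $\sigma$ with $\|H-\hnn\|^2_2 + \|\partial H/\partial x - \partial \hnn/\partial x\|_2^2$ as small as desired; applying Lemma~\ref{lem:hornik} componentwise to the $C^1$ map $u$ likewise gives a network $\unn$ with activation $\rho$ approximating $u$ together with its Jacobian in $L^2(K)$ to any desired accuracy.

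Next comes the crux, the inverse Jacobian. Because $u$ is a $C^1$-diffeomorphism on a neighborhood of the compact set $K$, the Jacobian $\partial u/\partial x$ takes values in a compact subset of $\mathrm{GL}(N)$; matrix inversion is continuous, hence Lipschitz, on a slightly enlarged compact neighborhood $\mathcal{K}\subset\mathrm{GL}(N)$ of those values. If the $C^1$-approximation of $u$ by $\unn$ is fine enough, $\partial \unn/\partial x$ stays within $\mathcal{K}$ uniformly on $K$, so $(\partial \unn/\partial x)^{-1}$ is well defined and uniformly bounded there, and $\|(\partial u/\partial x)^{-1} - (\partial \unn/\partial x)^{-1}\|$ is bounded by the inversion Lipschitz constant times $\|\partial u/\partial x - \partial \unn/\partial x\|$. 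In particular the neural-network model is even well posed on $K$, which is not automatic and must be secured before any quantitative estimate.

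Finally I would assemble the estimate. Writing the two vector fields as $P^{-1} G P^{-\top} g$ with $(P,g)=(\partial u/\partial x,\partial H/\partial x)$ versus $(P_\NN,g_\NN)=(\partial \unn/\partial x,\partial \hnn/\partial x)$, all of $P^{-1}$, $P_\NN^{-1}$, $G$, $g$, $g_\NN$ are bounded on $K$, so the multilinear map $(A,B,v)\mapsto A G B^\top v$ is Lipschitz on the relevant bounded region; a telescoping argument then bounds the $L^2$ difference of the vector fields by a constant (depending only on $K$, $G$, and the uniform bounds above) times $\big(\|P^{-1}-P_\NN^{-1}\| + \|g-g_\NN\|_2\big)$, and both terms were made arbitrarily small in the previous steps — the base-point mismatch in the gradients, if one views $\hnn$ as a function of $u$, being absorbed by uniform continuity of $\partial H/\partial u$ together with the uniform convergence $\unn\to u$. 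Choosing the two approximation accuracies appropriately yields $\varepsilon$. I expect the main obstacle to be exactly this inverse Jacobian: simultaneously guaranteeing invertibility of $\partial \unn/\partial x$ on all of $K$ and obtaining a quantitative bound on the difference of inverses, for which the compactness of $\partial u/\partial x$ inside $\mathrm{GL}(N)$ and the $C^1$-density in Lemma~\ref{lem:hornik} are precisely what is needed.
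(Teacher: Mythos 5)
Your proposal is correct and follows essentially the same route as the paper's own (very terse) proof: approximate $H$ and $u$ via Hornik's Sobolev-space density results, then handle the new ingredient $(\partial u/\partial x)^{-1}$ through the continuity of the determinant and of matrix inversion. You actually supply considerably more detail than the paper does --- in particular the uniform invertibility of $\partial \unn/\partial x$ on $K$ and the telescoping estimate for the multilinear combination, which the paper leaves implicit.
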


\begin{proof}
	We need to prove the approximation property for $(\partial u/\partial x)^{-1}$. From the assumption that $\rho \neq 0$ is in $S^1_2(\R)$, there exists a function $\unn$ that approximates $\partial u/\partial x$. Because the determinant function of matrices is continuous, it is deduced that $\det \partial \unn/\partial x \neq 0$ and hence $(\partial \unn/\partial x)^{-1}$ exists. Because the matrix inverse is also continuous, $(\partial \unn/\partial x)^{-1}$ is also approximated by $\unn$.
\end{proof}

\section{Details on Numerical Experiments}\label{sec:app_ZK}
\subsection{Energy-based physical model with the coordinate transformations}
In this study, we considered a model with coordinate transformations:
\begin{align*}
	\frac{\mathrm{d}x}{\mathrm{d}t}=(\frac{\partial u}{\partial x})^{-1} G (\frac{\partial u}{\partial x})^{-\top} \frac{\partial H}{\partial x}.
\end{align*}
For a better understanding of the significance of the theoretical results presented herein, we performed numerical tests using this model. 

\paragraph{Double pendulum.}
First, we consider the double pendulum shown in Figure 2,
for which the equation of motion is
\begin{align*}
	 & \frac{\d \theta_1}{\d t} = \phi_1, \quad
	\frac{\d \theta_2}{\d t} = \phi_2,  \notag
	\\
	 & \frac{\d \phi_1}{\d t} = 
	 \Bigl(
	 G[\sin \theta_2 \sin(\theta_1-\theta_2) - \frac{m_1 + m_2}{m_2} \sin(\theta_1)]
	 \\ & \qquad \qquad
		- [l_1 \theta_1^2 \cos(\theta_1-\theta_2) + l_2 \theta_2^2] \sin(\theta_1 - \theta_2)
		\Bigr)
		\\ & \qquad \qquad 
		/
		{{l_1 [\frac{m_1 + m_2}{m_2} - \cos^2(\theta_1 - \theta_2)]}},
	\notag
	\\
	 & \frac{\d \phi_2}{\d t} =
	\Bigl(
	\frac{g(m_1 + m_2)}{m_2} [\sin \theta_1 \cos(\theta_1-\theta_2) - \sin(\theta_2)]
	\\ & \qquad 
		- [\frac{l_1 (m_1 + m_2)}{m_2} \theta_1^2  + l_2 \theta_2^2 \cos(\theta_1-\theta_2)] \sin(\theta_1 - \theta_2) 
		\Bigr)
		\\ & \qquad \qquad
		/
		{l_2 [\frac{m_1 + m_2}{m_2} - \cos^2(\theta_1 - \theta_2)]}. 
\end{align*}
The energy function of this system is
\begin{align*}
	H & =  \frac{1}{2}(m_1+m_2)l_1^2 \phi_1^2 
	\\ &
	+ \frac{1}{2}m_2 l_2^2 \phi_2^2
	+ m_2 l_1 l_2 \phi_1 \phi_2 \cos(\theta_1 - \theta_2)
	\\ &
	+ g (m_1 + m_2) l_1 \cos \theta_1 + g m_2 l_2 \cos(\theta_2),
\end{align*}
and the Lagrangian is
\begin{align*}
	\mathcal{L} = & \frac{1}{2}(m_1+m_2)l_1^2 \phi_1^2 + \frac{1}{2}m_2 l_2^2 \phi_2^2
	\\ &
	+ m_2 l_1 l_2 \phi_1 \phi_2 \cos(\theta_1 - \theta_2)
	\\ &
	- g (m_1 + m_2) l_1 \cos \theta_1 - g m_2 l_2 \cos(\theta_2),
\end{align*}
which gives the generalized momenta of this system:
\begin{align*}
	p_1 & = \frac{\partial \mathcal{L}}{\partial \phi_1}
	= (m_1 + m_2) l_1^2 \phi_1 + m_2 l_1 l_2 \phi_2 \cos(\theta_1 - \theta_2), \\
	p_2 & = \frac{\partial \mathcal{L}}{\partial \phi_2}
	= m_2 l_2^2 \phi_2 + m_2 l_1 l_2 \phi_1 \cos(\theta_1 - \theta_2).
\end{align*}
Because the generalized momenta are difficult to observe, we suppose that the values of the state variables $\theta_1, \theta_2$ and their derivatives $\phi_1, \phi_2$ are given as data.
In such a situation, HNNs \citep{Greydanus2019-gy}
\begin{align*}
	\frac{\d}{\d t}
	\begin{pmatrix}
		q_1 \\ q_2 \\ p_1 \\ p_2
	\end{pmatrix}
	= \begin{pmatrix}
		O  & I \\
		-I & O
	\end{pmatrix}
	\nabla \hnn(q_1, q_2, p_1, p_2)
\end{align*}
are not applicable, because this model can be used only when $p_1$ and $p_2$ are generalized momenta; in the case considered here, they are supposed to be unknown.

To confirm this, we tested the model
\begin{align*}
	\frac{\mathrm{d}x}{\mathrm{d}t}=(\frac{\partial u}{\partial x})^{-1} S (\frac{\partial u}{\partial x})^{-\top} \frac{\partial \hnn}{\partial x}
\end{align*}
and a naive HNN
\begin{align}\label{eq:hnn_naive_model}
	\frac{\d}{\d t}
	\begin{pmatrix}
		q_1 \\ q_2 \\ v_1 \\ v_2
	\end{pmatrix}
	= \begin{pmatrix}
		O  & I \\
		-I & O
	\end{pmatrix}
	\nabla \hnn(q_1, q_2, v_1, v_2),
\end{align}
where $v_1, v_2$ are not the generalized momenta but the velocities $v_1 = \dot{q}_1, v_2 = \dot{q}_2$.

The experiments were implemented using Python 3.8.5 with the packages PyTorch 1.7.1, NumPy 1.19.5, SciPy 1.6.0, autograd 1.3, and torchdiffeq 0.2.1.
For data, we used numerical solutions to the equation of motion 
with the parameters $l_1 = l_2 = 1.0$, $m_1 =1$, $m_2 = 2$, and $g = 9.8$
solved by SciPy odeint with 100 initial conditions randomly generated from the standard normal distribution.
We numerically integrated each orbit on the time interval $[0, 5]$, in which the computed values are evaluated at 100 points with an identical sampling rate. Then, the target data
\begin{align*}
	\frac{\d}{\d t}
	\begin{pmatrix}
		\theta_1 \\ \theta_2 \\ \phi_1 \\ \phi_2
	\end{pmatrix}
\end{align*}
are obtained by substituting the computed state variables into the right-hand side of the equation. 
The experiments were performed on an NVIDIA GeForce 2080 Ti.
The energy function was modeled by using a two-layered fully connected neural network with 50 hidden units and $\tanh$ as the activation function.
We minimized the mean squared error of the time derivative as the loss function using the Adam optimizer with a learning rate of $0.001$ and a batch-size of 200 for 10,000 iterations.

\begin{rem}
	For the coordinate transformations in the model, 
	invertible neural networks should be used; however, we used a simple neural network here because this model is assumed in the theorem in the main text.
	In the theorem, we needed neural networks that can approximate given functions and their derivatives.
	Although the universal approximation property of invertible neural networks was recently proved \citep{teshima2020couplingbased}, this theorem shows the universal approximation property of the function values only, not of the derivatives, which are needed for computation of the Jacobi matrix in our model.
\end{rem}

Examples of the predicted orbits are shown in Figure \ref{fig:double_pendulum_results}. 
The training losses were 13.6 for the HNN and 0.280 for the model with coordinate transformations.
As shown in this figure, the naive model failed to capture the dynamics correctly. This is because the dynamics of $\theta_1, \theta_2, \phi_1$, and$ \phi_2$ cannot be described by equation \eqref{eq:hnn_naive_model}. This illustrates that, to model physical phenomena by using a model of the form
\begin{align*}
	\frac{\d x}{\d t} = S \frac{\partial H}{\partial x},
\end{align*}
the choice of the coordinate system is important.

\paragraph{Mass-spring system.}
\begin{figure}[t]
	\centering
	\includegraphics[width=0.6\linewidth]{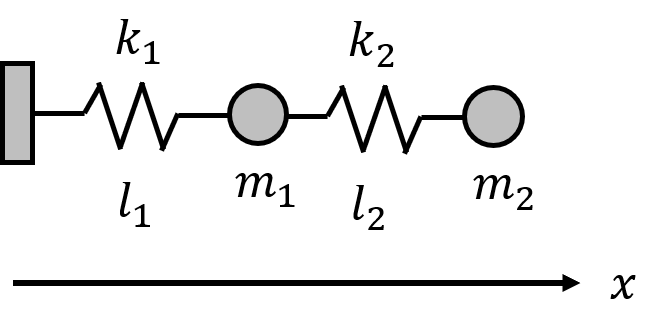}
	\caption{Target mass-spring system.}
	\label{fig:mass_spring}
\end{figure}

Due to the well-known chaotic behaviors of the double pendulum, the results, in particular the values of the losses, of the previous experiments are to a certain extent unstable, except for the Hamilton neural networks always failing.
Therefore, second, we investigated the models in more detail using the simple mass-spring system depicted in Figure \ref{fig:mass_spring}. The two mass points $m_1$ and $m_2$ are connected by springs, which respectively have the spring constants $k_1$ and $k_2$ and natural lengths $l_1$ and $l_2$. This system is a Hamiltonian system with the energy function
\begin{align}
	 & H(q_1, q_2, p_1, p_2)
	= \notag
	\frac{p_1^2}{2 m_1} + \frac{p_2^2}{2 m_2} 
	\\ &
	+ \frac{k_1 (q_1-l_1)^2}{2} + \frac{k_2 (q_2-q_1 - l_2)^2}{2}, \label{eq:mass_spring}
\end{align}
where $q_1$, $q_2$ are the positions of the mass points and $p_1$, $p_2$ are the momenta, which are defined by $p_1 = m_1 v_1, p_2 = m_2 v_2, v_1 = \d p_1/\d t, and v_2 = \d p_2/\d t$.
Suppose that we do not know the exact values of $m_1$ and $m_2$ and only the positions $q_1$ and $q_2$ and their derivatives can be observed.
Although $m_1$ and $m_2$ may be estimated from the data, for evaluation of the models, we tried to model the dynamics only using $q_1$, $q_2$, and their derivatives.

We examined the naive model,
the model with the coordinate transformation, and the neural ordinary differential equation (ODE,  \citet{Chen2018-yw})
\begin{align*}
	\frac{\d}{\d t}
	\begin{pmatrix}
		q_1 \\ q_2 \\ v_1 \\ v_2
	\end{pmatrix}
	= f_{\mathrm{NN}}(q_1, q_2, v_1, v_2).
\end{align*}

The experimental conditions were almost the same as those in the previous experiment, except for the batch size, which we set to 100. For data, we used numerical solutions to
\begin{align}\label{eq:mass_spring2}
	\frac{\d}{\d t}
	\begin{pmatrix}
		q_1 \\ q_2 \\ v_1 \\ v_2
	\end{pmatrix}
	=
	\begin{pmatrix}
		v_1                                                               \\
		v_2                                                               \\
		- \frac{k_1}{m_1} (q_1 - l_1) + \frac{k_2}{m_1} (q_2 - q_1 - l_2) \\
		- \frac{k_2}{m_2} (q_2 - q_1 - l_2)                               \\
	\end{pmatrix},
\end{align}
which is equivalent to equation \eqref{eq:mass_spring}.


Examples of the predicted orbits are shown in Figure \ref{fig:mass_spring_results}. While the model with coordinate transformations produced an orbit that was nearly identical to the ground truth, the naive model failed to predict the states.
In fact, it should be impossible to rewrite equation \eqref{eq:mass_spring2} as \eqref{eq:hnn_naive_model} with a certain energy function;
for example, when the system has just one mass point and the equation of motion is given by
\begin{align*}
	\frac{\d}{\d t}
	\begin{pmatrix}
		q_1 \\ v_1
	\end{pmatrix}
	=
	\begin{pmatrix}
		v_1                           \\
		- \frac{k_1}{m_1} (q_1 - l_2) \\
	\end{pmatrix},
\end{align*}
this can be transformed into a Hamiltonian system
\begin{align*}
	 & \frac{\d}{\d t}
	\begin{pmatrix}
		q_1 \\ v_1
	\end{pmatrix}
	=
	\begin{pmatrix}
		0  & 1 \\
		-1 & 0
	\end{pmatrix}
	\nabla \tilde{H},                                                    \\
	 & \tilde{H} = \frac{v_1^2}{2 m_1^\prime} + k_1^\prime (q_1 - l_1)^2
\end{align*}
with $k_1^\prime = k_1/m_1, m_1^\prime$ and the mass $m_1^\prime$ = 1. Hence, for this system, the HNNs are applicable without knowledge of $m_1$. However, for system \eqref{eq:mass_spring2}, such a transformation cannot be applied.
Meanwhile, in the model with the coordinate transformation, a coordinate transformation that makes the equation Hamiltonian is explored.

\begin{figure}[t]
	\centering
	\subfigure[Ground truth]{\includegraphics[width=0.48\linewidth]{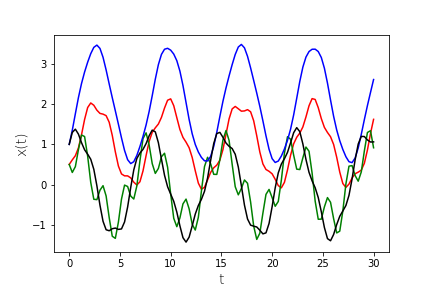}}
	\subfigure[Neural ODE]{\includegraphics[width=0.48\linewidth]{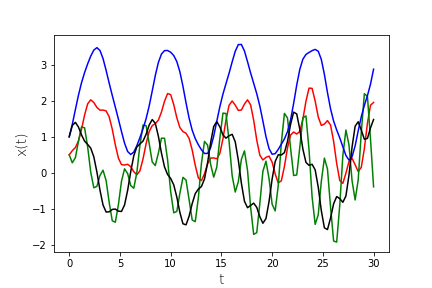}}\\[-2mm]
	\subfigure[Naive  HNN]{\includegraphics[width=0.48\linewidth]{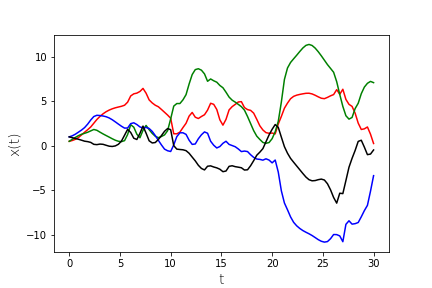}}
	\subfigure[Model with coordinate transformations]{\includegraphics[width=0.48\linewidth]{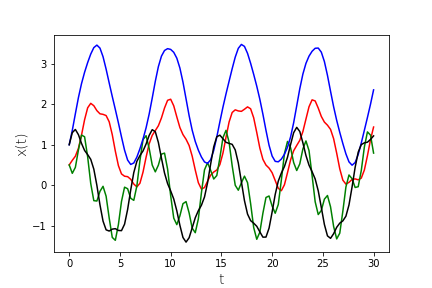}}
	\caption{Example predicted orbits, where red = $q_1$, green = $v_1$, blue = $q_2$, and black = $v_2$.}
	\label{fig:mass_spring_results}
	\centering
	\subfigure[Neural ODE]{\includegraphics[width=0.48\linewidth]{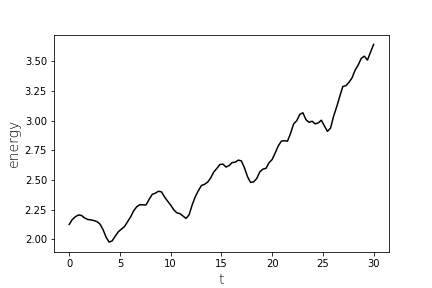}}\
	\subfigure[Model with coordinate transformations]{\includegraphics[width=0.48\linewidth]{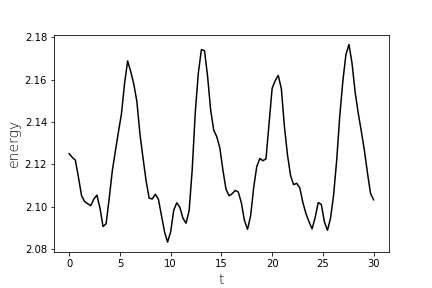}}\\
	\caption{Predicted values of the energy function.}
	\label{fig:energy}
\end{figure}

The result obtained by the neural ODE is better than that by the naive model; however, the prediction error became larger as time increased. 
This is due to the non-existence of the energy conservation law for the neural ODE. In fact, the value of the energy function of the neural ODE model was increasing, as shown in Figure \ref{fig:energy}.

The losses for the above models are listed in Table \ref{tab:mass_spring_results} for 12 experiments for each model. 
The values of the loss functions of the neural ODE were very small and those of the model with coordinate transformations were not stable. Actually, the histories of the loss functions for this model were not monotonic at all. This is due to the non-uniqueness of the model with coordinate transformations; if a model is fit to the given data quite well, then so are its canonical transformations. Due to this feature of the model, after the loss function decreased, it sometimes increased significantly in the search for different coordinate transformations. The failure in the learning process can be detected as the large training loss, and if the learning process is successful, the test loss is competitive with that of the neural ODEs. Moreover, the neural ODEs exhibit energy drift, as seen above. Therefore, the model with coordinate transformations is the most suitable for a long-term  physics simulation in an unknown coordinate system.


\begin{table*}[t]
	\centering
	\caption{Training and test losses for 12 trials.}
	\begin{tabular}{|l|l|l|l|l|l|}
		\hline
		\multicolumn{2}{|l|}{Neural ODE} & \multicolumn{2}{l|}{HNN} & \multicolumn{2}{l|}{\parbox{11em}{Model with                                                                     \\ coordinate transformations\\[-0.6em]}}
		\\ \hline
		\parbox{5em}{train}              & \parbox{5em}{test}       & \parbox{5em}{train}                          & \parbox{5em}{test} & \parbox{5.5em}{train} & \parbox{5.5em}{test} \\ \hline
		0.000361                         & 0.00223                  & 0.566                                        & 15.6               & 0.00697               & 0.00823              \\
		0.000221                         & 0.00101                  & 0.614                                        & 18.1               & 17.9                  & 3214.7               \\
		0.0000209                        & 0.000605                 & 0.630                                        & 17.0               & 0.00198               & 0.00688              \\
		0.000197                         & 0.000612                 & 0.527                                        & 17.4               & 0.000253              & 0.00254              \\
		0.000128                         & 0.000552                 & 0.637                                        & 15.9               & 24.2                  & 3247.7               \\
		0.000258                         & 0.00104                  & 0.603                                        & 17.3               & 0.000954              & 0.0101               \\
		0.000999                         & 0.00150                  & 0.586                                        & 16.2               & 0.000550              & 0.00996              \\
		0.000527                         & 0.00181                  & 0.583                                        & 18.9               & 0.230                 & 2.96                 \\
		0.000307                         & 0.00164                  & 0.572                                        & 19.6               & 0.000565              & 0.00513              \\
		0.0000450                        & 0.000742                 & 0.599                                        & 18.2               & 0.00316               & 0.00722              \\
		0.0000522                        & 0.00101                  & 0.627                                        & 17.9               & 0.000615              & 0.00401              \\
		0.0000374                        & 0.000931                 & 0.614                                        & 16.1               & 0.00909               & 0.229
		\\ \hline
	\end{tabular}
	\label{tab:mass_spring_results}
\end{table*}

\end{document}